\newtheorem{theorem}{Theorem}[section]
\newtheorem{prop}[theorem]{Proposition}
\newtheorem{computation}[theorem]{Computation}
\newtheorem{algor0}{Algorithm}
\def\rig#1{\smash{ \mathop{\longrightarrow}
 \limits^{#1}}}
\newcommand{\Gr}{\operatorname{Gr}}
\newcommand{\SL}{\operatorname{SL}}
\newcommand{\SO}{\operatorname{SO}}
\newcommand{\V}{{\mathcal V}}
\newcommand{\rank}{\operatorname{rank}}
\newcommand{\Frank}{\operatorname{F-Rank}}
\newcommand{\Prank}{\operatorname{P-Rank}}
\newcommand{\codim}{\operatorname{codim}}
\newcommand{\PP}{\mathbb{P}}
\newcommand{\RR}{\mathbb{R}}
\newcommand{\CC}{\mathbb{C}}
\newcommand{\Seg}{\operatorname{Seg}}
\newcommand{\Sub}{\operatorname{Sub}}
\newcommand{\Rank}{\operatorname{Rank}}
\def\bw#1{{\textstyle \bigwedge^{\hspace{-.2em}#1}}}
\theoremstyle{definition}
\theoremstyle{remark}
\newtheorem{remark}[theorem]{Remark}
\begin{document}

\author{Chris Aholt$^{a}$, Luke Oeding$^{b}$}
\address{$^{A}$ Department of Mathematics, University of Washington, Seattle, WA, USA}
\address{$^{B}$ Department of Mathematics,
University of California Berkeley,
Berkeley, CA, USA
}
\email{aholtc@uw.edu$^{A}$}
\email{oeding@math.berkeley.edu $^{B}$}

\date{\today}

\newcommand{\red}[1]{{\color{red} \sf #1}}

\title{The ideal of the trifocal variety}
\begin{abstract}
Techniques from representation theory, symbolic computational algebra, and numerical algebraic geometry are used to find the minimal generators of the ideal of the trifocal variety. An effective test for determining whether a given tensor is a trifocal tensor is also given.
\end{abstract}
\maketitle

\section{Introduction}

In the field of multiview geometry one studies $n\ge 2$ planar images of points in space.
Given $n$ full rank $3\times 4$ matrices
$A_1,\ldots,A_n$ over $\mathbb{C}$, these {\em camera matrices} determine a rational map
\[\phi: \PP^3\dashrightarrow (\PP^2)^n\qquad x\mapsto (A_1x,\cdots, A_nx)\]
from projective 3-space into the $n$-fold product of projective planes.
For any given tuple $(A_1,\ldots,A_n)$ the image of this map determines
a variety $\overline{\phi(\PP^3)}\subseteq(\PP^2)^n$ called
the {\em multiview variety associated to $(A_1,\ldots,A_n)$}.

In \cite{AST} the authors determine the prime ideal
defining the multiview variety for a generic fixed tuple of cameras such that the camera matrices $A_{1},\dots,A_{n}$ have pairwise distinct kernels.
In this paper we focus on a different, but related variety in the special case of $n=3$ cameras:
the variety of all trifocal tensors \cite[Ch. 15]{HZ}. 
The essential difference is that for the multiview variety the camera matrices are fixed and this determines a map from the world to a set of images, but in the trifocal setup we consider the set of tensors determined by all possible general configurations of triples of cameras.
Algebraically, the collection of trifocal tensors is parameterized by the $4\times 4$ minors of the $4\times 9$ matrix $(A_1^T \mid A_2^T\mid A_3^T)$ which involve one column each from the first two blocks, and two columns from the third block \cite{Heyden}. Geometrically, a trifocal tensor arises from a bilinear map describing the geometry of a given configuration of cameras.   We give a complete description of the ideal describing this subvariety of tensors.

We further describe this geometric map. Each camera matrix $A_i$ determines a {\em focal point} $f_i\in\PP^3$ and a {\em viewing plane} $\pi_i\simeq \PP^2\subseteq\PP^3$.  The image in camera $i$ of a point $x\in\PP^3$ is determined by intersecting the line $\langle f_i,x\rangle$ with the plane $\pi_i$.  Now consider lines $l_i\subseteq \pi_i$ for $i=1,2$.  The planes $\langle f_1,l_1\rangle$ and $\langle f_2,l_2\rangle$ generically intersect in a line $l_{1,2}\subseteq\PP^3$, and the plane $\langle f_3,l_{1,2}\rangle$ generically intersects $\pi_3$ in a line $l_3$.  See Figure~\ref{fig:trifocal 3d}.

We have described for a sufficiently general camera configuration, a map
\[
\PP^2 \times \PP^2 \rightarrow \PP^2
,\]
given by $(l_1,l_2)\mapsto l_3$.  This map must come from a bilinear map
\[
 \CC^3 \times  \CC^3 \rightarrow  \CC^3
.\]
To help avoid ambiguity, fix $A,B,C\simeq \CC^3$ so that this map is now $A\times B\to C$.
This bilinear map is equivalently a tensor $T \in A^{*}\otimes B^{*}\otimes C$, called a {\em trifocal tensor} because of its derivation. For more details, see \cite[Chapter 15]{HZ},\cite{Alzati-Tortora}, \cite{Heyden}, \cite{PF}.
\begin{figure}\caption{The trifocal tensor as a map $\PP^2 \times \PP^2 \rightarrow \PP^2$}\label{fig:trifocal 3d}
\vspace{0.2cm}
\includegraphics[width=0.7\linewidth]{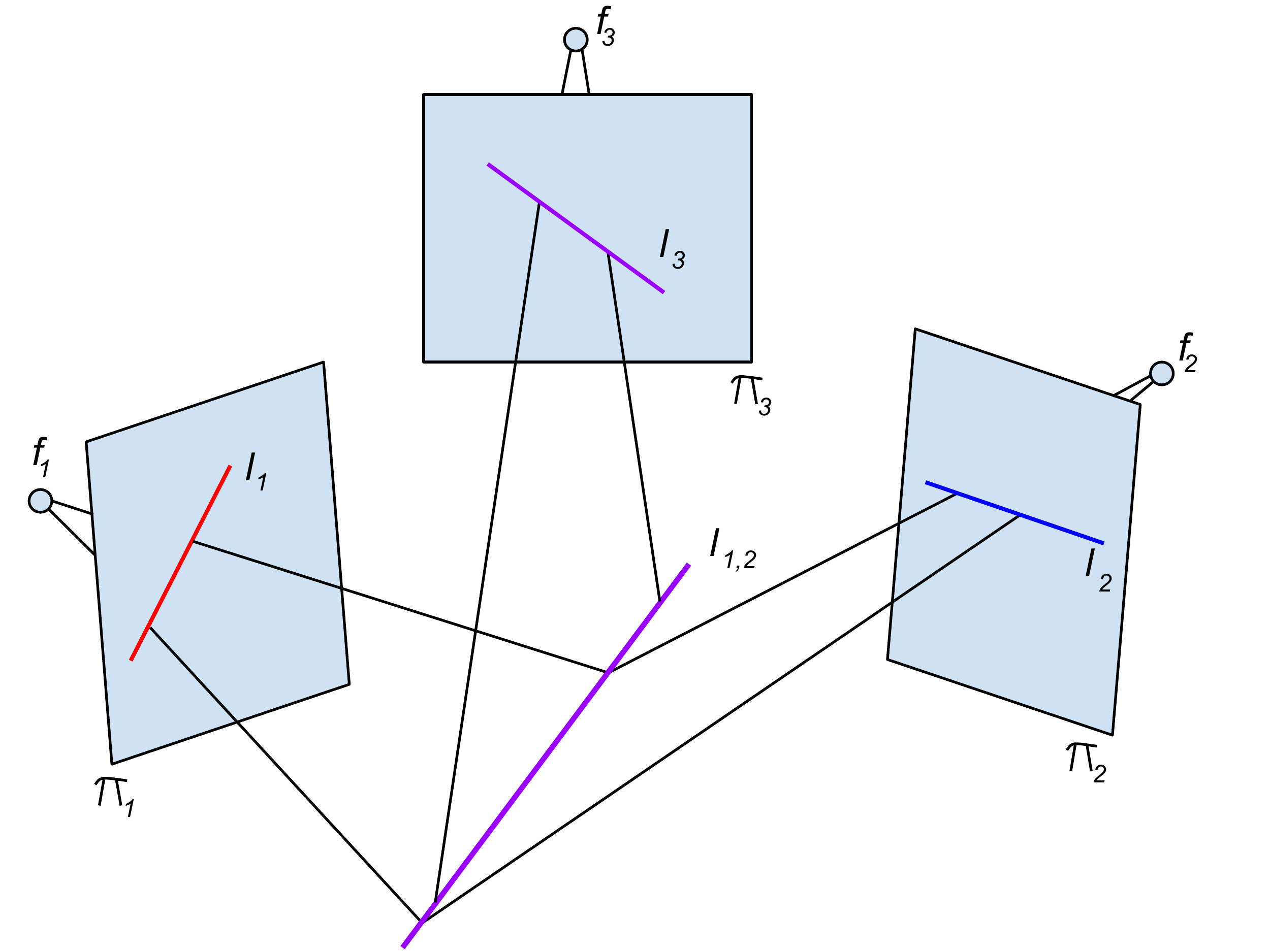}
\end{figure}


One way to connect these two seemingly different algebraic and geometric constructions is via the following construction, which shows (in an invariant way) how the parametrization using special minors of a $4\times 9$ matrix gives rise to a tensor parametrization.  This also relates to the compactified camera space considered in \cite{AST}.

The row space of $\left( A_1^T\mid A_2^T\mid A_3^T\right)$ determines a point in the Grassmannian $\Gr(4,9)$.
Set $U_{1},U_{2},U_{3}$ respectively as the 3-dimensional column spaces of the blocked matrix $\left( A_1^T\mid A_2^T\mid A_3^T\right)$. The direct sum $W = U_{1}\oplus U_{2} \oplus U_{3}$ is 9-dimensional, and we can view the matrix  $\left( A_1^T\mid A_2^T\mid A_3^T\right)$ as describing a point in the Grassmannian
\[
\Gr(4,W) \subset \PP (\bw4 W)
.\]
Consider the group $G=\SL(U_{1})\times \SL(U_{2}) \times SL(U_{3}) \subset SL(W)$, which can be thought of as the group of (unit determinant) $3\times3$ blocks on the diagonal of a $9\times 9$ matrix. Now $\bw4 W$ decomposes as a $G$-module as follows
\begin{multline*}
\bw{4}(U_{1}\oplus U_{2} \oplus U_{3}) = \left(\bigoplus_{i\neq j}U_{i}\otimes \bw3 U_{j} \right) \oplus\left(\bigoplus_{i\neq j}\bw2U_{i}\otimes \bw2 U_{j} \right)
\\
\oplus\left(\bigoplus_{i,j,k\ \text{distinct}}U_{i}\otimes  U_{j}\otimes \bw2U_{k} \right)
.\end{multline*}
If we take $A^{*}=U_{1}, B^{*}=U_{2}$ and $C^{*}=U_{3}$, we see that the factor $U_{1}\otimes  U_{2}\otimes \bw2U_{3}$ is isomorphic to $A^{*}\otimes B^{*} \otimes C$ and corresponds to the space of maximal minors of a $4\times 9$ matrix using 1 column from the first two $4\times 3$ blocks and 2 columns from the last $4\times 3$ block.
So we get a $G$-equivariant projection 
\[\pi :\Gr(4,9) \dashrightarrow \PP \left(U_{1}\otimes  U_{2}\otimes \bw2U_{3}\right) \cong \PP (A^{*}\otimes B^{*}\otimes C),\]  and the closure of the image of this projection is the trifocal variety $X$. Because the projection is $G$-equivariant, the image $X$ is automatically $G$-invariant.  
The generic fiber of the projection $\pi$ is $3$ dimensional. However, 
there is a $(\CC^*)^{3}$ action by scaling each $A_i$ that leaves the set of trifocal tensors invariant. The GIT quotient $\Gr(4,9)/\!/(\CC^*)^{3}$ has dimension 18, which is the dimension of $X$, \cite{AST,Alzati-Tortora}.
So
if one works with the GIT quotient, the map
\[\pi :\Gr(4,9)/\!/(\CC^{*})^{3} \dashrightarrow \PP \left(U_{1}\otimes  U_{2}\otimes \bw2U_{3}\right),\] 
 becomes birational to $X$. 

One would like to know when a given tensor in $V=A^{*}\otimes B^{*}\otimes C$ arose as a trifocal tensor.  The Zariski closure of the set of all such trifocal tensors defines an irreducible algebraic variety, called the \emph{trifocal variety} and hereafter denoted by $X\subset \PP V$. Let $I(X)$ denote the ideal of polynomial functions vanishing on $X$, hereafter called the \emph{trifocal ideal}.  Since a tensor $T$ is a trifocal tensor (or a limit of such) if and only if $T$ is a zero of every polynomial in the ideal $I(X)$, the question of identifying trifocal tensors can be answered (at least for general tensors in an open set of $X$) by determining the minimal generators of $I(X)$.

In \cite{Alzati-Tortora} the authors determine a set of polynomials that cut
out $X$ as a set.  However, their set of polynomials does not
generate the ideal $I(X)$. We note that \cite{PF} and \cite{Ressl} also found some equations vanishing on $X$, but neither described the entire trifocal ideal.
The focus of this article is to determine the minimal generators of $I(X)$.

Choosing bases $\{a_{1},a_{2},a_{3}\}$, $\{b_{1},b_{2},b_{3}\}$, and $\{c_{1},c_{2},c_{3}\}$ of $A^{*}$, $B^{*}$ and $C$ respectively,
any tensor $T\in V$ can be realized as
\[T = \sum_{i,j,k=1}^{3} T_{i,j,k}a_{i}\otimes b_{j} \otimes c_{k}\]
via the 27 variables $T_{i,j,k}$ for $1\le i,j,k\le 3$
Therefore, the trifocal ideal lives in the polynomial ring $k[T_{ijk}]$.

The cubic polynomials in the ideal are the 10 coefficients of
$$\det(x_1 T_{ij1} + x_2 T_{ij2} + x_3 T_{ij3}).$$
One component of the zero set of these polynomials is our variety.  To remove the other components we add polynomials of degree 5 and 6.

To simplify matters, we will take the ground field to be $k=\CC$; however, we note that in practice, one works over $\RR$.  A tensor with real entries is on the complex trifocal variety if and only if it is a zero of all polynomials in $I(X)$.  And indeed, all of the generating polynomials in $I(X)$ can be taken with rational coefficients, and thus are in the ideal of $X$ when considered as a variety over $\RR$. 

Our result is the following.
\begin{theorem}\label{thm:main} Let $X$ denote the trifocal variety.
The prime ideal $I(X)$ is minimally generated by 10 polynomials in degree 3, 81 polynomials in degree 5, and 1980 polynomials in degree 6.
\end{theorem}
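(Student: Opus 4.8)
The plan is to prove Theorem~\ref{thm:main} by a representation-theoretic, degree-by-degree analysis of $I(X)$, followed by a symbolic-and-numerical verification that the generators so obtained are complete. Since $X$ is invariant under $G=\SL(A)\times\SL(B)\times\SL(C)$ (as noted above) and under scaling, each graded piece $I(X)_{d}\subseteq\Sym^{d}V^{*}$ with $V^{*}=A\otimes B\otimes C^{*}$ is a $\GL(A)\times\GL(B)\times\GL(C)$-submodule, hence a direct sum of irreducibles $S_{\lambda}A\otimes S_{\mu}B\otimes S_{\nu}C^{*}$. The first step is to decompose $\Sym^{d}(A\otimes B\otimes C^{*})$ for $d=3,\dots,7$ via the Cauchy formula and Kronecker coefficients — a finite combinatorial computation — which yields a short list of candidate isotypic components for each $I(X)_{d}$; the symmetry exchanging $A$ and $B$ (swapping the first two cameras) constrains this list further.

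The second step decides which candidates actually occur. Using the parametrization of the excerpt, generate points of $X$ by choosing random full-rank $3\times 4$ matrices $A_{1},A_{2},A_{3}$, forming $(A_{1}^{T}\mid A_{2}^{T}\mid A_{3}^{T})$, and reading off the prescribed $4\times 4$ minors; then evaluate the highest-weight vectors of the candidate components at enough such points. A component lies in $I(X)_{d}$ precisely when its highest-weight vectors vanish on all the sample points — genericity makes this rank test conclusive — and by $G$-equivariance the whole component then lies in $I(X)$. Carried out in low degrees this should yield: in degree $3$, exactly the $10$-dimensional module $\bw3 A\otimes\bw3 B\otimes\Sym^{3}C^{*}$ spanned by the coefficients of $\det(x_{1}T_{ij1}+x_{2}T_{ij2}+x_{3}T_{ij3})$; in degree $4$, $I(X)_{4}=V^{*}\cdot I(X)_{3}$, so nothing new; in degree $5$, $I(X)_{5}/(\Sym^{2}V^{*}\cdot I(X)_{3})$ a sum of complete isotypic components of total dimension $81$; and in degree $6$, $I(X)_{6}/(\Sym^{3}V^{*}\cdot I(X)_{3}+V^{*}\cdot I(X)_{5})$ of dimension $1980$. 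Minimality of the $10+81+1980$ generators is then automatic, since each count records the dimension of isotypic components of $I(X)_{d}$ not reachable from strictly lower degrees.

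The main obstacle is to prove that the ideal $J$ generated by these polynomials is all of $I(X)$, i.e.\ that there are no further minimal generators in degree $\ge 7$. Containment $J\subseteq I(X)$ follows by substituting the parametrization and checking that a highest-weight vector of each of the three generating blocks vanishes identically on $X$. For the reverse containment I would use numerical algebraic geometry: compute a numerical irreducible decomposition of $V(J)$ and confirm that it is the single component $X$, of dimension $18$ and of the expected degree (obtained from a witness set for the parametrization); since $X\subseteq V(J)$ is irreducible of the same dimension, this gives $V(J)=X$ set-theoretically. To upgrade this to $J=I(X)$ one must still exclude the possibility that $J$ is non-saturated or non-radical; I would do so by computing the Hilbert series of $\CC[T_{ijk}]/J$ from a Gr\"obner basis — in practice modulo a large prime, which suffices by semicontinuity of the Hilbert function — and matching it against the Hilbert function of $\CC[T_{ijk}]/I(X)$ obtained by the interpolation of the second step. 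Since $J\subseteq I(X)$, equality of Hilbert series forces $I(X)/J=0$, hence $J=I(X)$; the degree-$7$ interpolation, returning no new isotypic components, is then a consistency check confirming the generator list is complete. The heaviest parts of this program are the plethysm bookkeeping in degrees $5$ and $6$, the large-scale interpolation in $\Sym^{6}V^{*}$, and the Gr\"obner-basis Hilbert-series computation for an ideal with roughly two thousand generators in $27$ variables.
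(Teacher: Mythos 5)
Your first step (decomposing $\Sym^{d}V^{*}$ into irreducibles and testing highest-weight vectors at random points of the parametrization) is exactly what the paper does, and your count of minimal generators modulo the lower-degree pieces is the right notion of minimality. The difficulties begin in your last two steps.

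First, your set-theoretic step is only numerical. A numerical irreducible decomposition of $V(J)$ does not prove $V(J)=X$; the paper runs Bertini only on the ten cubics $M_{3}$ and then converts the numerical output into a proof using Nurmiev's classification of the $\SL(3)^{\times 3}$-orbits in $\CC^{3}\otimes\CC^{3}\otimes\CC^{3}$: one checks which orbit normal forms satisfy $M_{3}$, identifies the four components of $V(M_{3})$ from the orbit-closure poset, and then verifies that the extra orbits (17, 18 and their primed versions) are killed by explicit modules in $M_{5}+M_{6}$. Without some such exhaustive argument your claim $V(J)=X$ rests on floating-point evidence.

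Second, and more seriously, your mechanism for upgrading $\sqrt{J}=I(X)$ to $J=I(X)$ does not work as stated. You propose to compare the Hilbert series of $R/J$ with ``the Hilbert function of $R/I(X)$ obtained by the interpolation of the second step,'' but the interpolation only computes $I(X)_{d}$ for the finitely many degrees you actually ran it in (say $d\le 7$). Agreement of Hilbert functions in those degrees shows only that there are no further minimal generators up to degree $7$; it says nothing about $J_{d}=I(X)_{d}$ for large $d$, which is exactly where a primary component embedded along a proper subvariety of $X$ (or a failure of saturation) would live. You have no independent computation of the full Hilbert series of $R/I(X)$ to match against. The paper closes this gap differently: equality of $\deg J=297$ (from the Gr\"obner basis) with $\deg X=297$ gives multiplicity one on the top-dimensional component; the candidate embedded primes are pinned down as $P_{14},P_{15},P_{15'}$ using that associated primes of a $G$-stable ideal are $G$-stable together with the orbit poset; and each is excluded by exhibiting an $f\in P_{i}\setminus I(X)$ and checking $(1-t^{\deg f})H_{R/J}(t)=H_{R/(J+f)}(t)$, so that $f$ is a non-zero-divisor on $R/J$. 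Some argument of this kind---identifying the finitely many possible associated primes and ruling each out---is needed; a low-degree Hilbert function match cannot substitute for it.
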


There are noticeably more generators here than in \cite{Alzati-Tortora}, which showed that 10 equations of degree 3,
20 of degree 9, and 6 of degree 12 cut out $X$ set-theoretically. On the other hand, the degrees of our equations are lower and we know that they are the minimal degree polynomials that generate the ideal.

\section{Outline}
To prove Theorem \ref{thm:main} and determine the minimal generators of the trifocal ideal, we use a mixture of several different computational and theoretical tools that we now outline. In short, our strategy is to first find equations in the ideal in the lowest degrees, next show that the equations we found cut out the variety set theoretically and thus define an ideal that agrees up to radical with the ideal we want, and then we show that the two ideals are actually equal.

Because the trifocal construction is unchanged by changes of coordinates in each camera plane, we have a large group $G$ that acts on $X$. We describe this symmetry and various representations for points on $X$ in Section \ref{sec:trifocal}.  Then we describe the geometry of related $G$-varieties in $\PP V=\PP(A^{*}\otimes B^{*}\otimes C)$ in Section \ref{sec:other varieties}.

We let $I(X)_{d}$ denote the degree $d$ piece of $I(X)$, and denote by 
 $M_{d}(X)$, (or $M_{d}$ when the context is clear), the (vector space of) minimal generators of $I(X)$ occurring in degree $d$. The group $G$ acts on $X$ and thus on $I(X)$. This facilitates the search for all polynomials in $I(X)_{d}$  in low degree (for $d\leq 9$) via computations using classical representation theory.
In Section \ref{sec:rep} we describe our computations and identify which modules of polynomials are minimal generators assisted by symbolic computations in Maple and Macaulay2. In particular, we find that the only minimal generators of $I(X)$ for $d\leq 9$ occur in degrees 3, 5 and 6.
Next, we compute a Gr\"obner basis of the ideal  $J=\langle M_{3}+M_{5}+M_{6} \rangle$, and find (again in Macaulay2) that the degree of $J$ is 297.

Another valuable tool for understanding the zero-set of a collection of polynomials is a numerical primary decomposition via numerical algebraic geometry. 
For this we used Bertini \cite{Bertini} for experiments and computations; see also \cite{SW05}.
In Section \ref{sec:Bertini} we consider only $M_{3}$, the lowest degree (degree 3) part of $I(X)$, which has a basis of 10 polynomials in the 27 variables.  Here we obtain a numerical primary decomposition of $V(M_{3})$ using Bertini.
In particular, we find that up to the numerical accuracy of Bertini, $V(M_{3})$ has 4 components, and we are even given their degrees. This numerical result provides us with tangible data from which we are able to conjecture (and eventually prove) the true structure of $V(M_{3})$.
In particular, we find that up to the numerical accuracy of Bertini, $X$ has degree 297.

In Section \ref{sec:Nurmiev} we use geometric considerations and resort to Nurmiev's classification of orbits and their closures (\cite{Nurmiev, Nurmiev2}) to geometrically identify all the components found by the Bertini computation. This geometric understanding allows us to conclude in Proposition \ref{prop:irred} that the zero-set $\mathcal{V}(J)$ is equal to $X$ (as sets), so $\sqrt{J} = I(X)$. In Section \ref{sec:steven} we again use the classification of orbits and the orbit poset structure to show in Theorem \ref{thm:prime} that $J$ is prime and thus $J = I(X)$.

\section{The trifocal variety as an orbit closure}\label{sec:trifocal}

Consider $V=A^{*}\otimes B^{*}\otimes C$ and the natural left action of $G =\SL(A)\times \SL(B)\times \SL(C)\simeq \SL(3)^{\times 3}$ on $V$.  There is also a natural action of the symmetric group $\mathfrak{S}_{3}$ permuting the three factors in the tensor product, and it is easy to see that $X$ is invariant under the action of the $\mathfrak{S}_{2}$ permuting $A^{*}$ and $B^{*}$. However, this finite invariance does not provide much computational advantage.

\begin{remark}
Since we are working over $\CC$ we consider general changes of coordinates by $\SL(3,\CC)$. However, were we to work over $\RR$, we would want to change our analysis to consider rotations in the three planes, and the group action would be by $\SO(3,\RR)^{\times 3}$.
\end{remark}
Since the trifocal variety $X\subset \PP V$ is invariant under changes of coordinates in the camera planes, we say that $X$ is a $G$-variety.
Moreover, \cite{Alzati-Tortora} shows that $X$ is actually the closure of a single $G$-orbit in $\PP V$.

Because $G\simeq \SL(3)^{\times 3}$ is $24$ dimensional acting on $V\simeq\CC^{3}\otimes \CC^{3}\otimes \CC^{3}$, which is 27 dimensional, there must be infinitely many $G$-orbits in $V$.  Even so, the orbits happen to have been classified, apparently independently, by several authors.
Since elements of $V$ can be interpreted in a number of ways (as triples of $3\times 3$ matrices or $3\times 3$ matrices with linear entries depending on $3$ variables,  as $3-3-3$ trilinear forms or ternary trilinear forms, as cuboids or elements of a triple tensor product, or as a $G$-submodule of $ \bw{3}\CC^{9}$), the various classifications occurred in different settings --- see \cite{Thrall-Chanler, Ng, Nurmiev}.
 
We prefer to use Nurmiev's version of the classification, which follows Vinberg's conventions and uses the results and techniques of \cite{Vinberg-Elasvili}. One main reason for this choice is that Nurmiev also computed the closures of all the nilpotent orbits in a note \cite{Nurmiev2}, in the same language as the previous paper. 
There are 4 continuous families of orbits called semi-simple orbits, and one finite family of nilpotent orbits.  To every orbit $\mathcal{O}$ is associated a \emph{normal form}, which is a representative $v\in V$ such that $G.v = \mathcal{O}$. Though obviously not unique, we will typically choose a normal form that is as simple as possible, or that clearly reveals some of the structure of the orbit.

To use the Nurmiev classification, we first identify the trifocal variety as one of the orbits on Nurmiev's list.
Indeed, Alzati and Tortora give a normal form for the orbit of trifocal tensors that we now recall.  A general trifocal tensor $T$ may be, after a possible change of coordinates by $G$, identified as a tensor whose slices in the $C$-direction are
\[
T^{1} = \begin{pmatrix}
0&-1&0 \\
0&0&0 \\
1&0&0
\end{pmatrix}
T^{2} = \begin{pmatrix}
0&0&0 \\
0&-1&0 \\
0&1&0
\end{pmatrix}
T^{3} = \begin{pmatrix}
0&0&0 \\
0&0&0 \\
0&-1&1
\end{pmatrix}
\]
Choose bases $A^* = {\rm span}\{a_{1},a_{2},a_{3}\}$, $B^* = {\rm span}\{b_{1},b_{2},b_{3}\}$, and $C = {\rm span}\{c_{1},c_{2},c_{3}\}$.  Then we can write
\[T=
(-a_{1})\otimes b_{2}\otimes c_{1} + (-a_{3}) \otimes(- b_{1})\otimes c_{1} 
+(-a_{2})\otimes b_{2}\otimes c_{2} \]\[+ a_{3}\otimes b_{2}\otimes c_{2}
-a_{3}\otimes b_{2}\otimes c_{3} + a_{3}\otimes b_{3}\otimes c_{3}
,\]
which by changing coordinates (via $G$) may be written as
\[
T=
a_{1}\otimes b_{2}\otimes c_{1} + a_{3}\otimes b_{1}\otimes c_{1} 
+ a_{2}\otimes b_{2}\otimes c_{2}+ a_{3}\otimes b_{3}\otimes c_{3}
.\]

It is also useful to express a tensor $T$ via matrices with linear entries.  For this, one considers a pure tensor $a_{i}\otimes b_{j}\otimes c_{k}$ as a matrix with an $a_{i}$ in the $j,k$ position of the matrix.  Then do this for all pure tensors in an expression for $T$ and add the matrices. In fact, this describes the projections for the $\Prank$ varieties defined in Section~\ref{sec:other varieties}. A normal form for the trifocal variety has matrices of linear forms
\[
T(A) = 
 \begin{pmatrix}
a_{3}&0&0 \\
a_{1}&a_{2}&0 \\
0&0&a_{3} 
\end{pmatrix},\ 
T(B) = 
 \begin{pmatrix}
b_{2}&0&0 \\
0&b_{2}&0 \\
b_{1}&0&b_{3} 
\end{pmatrix},\ 
T(C) = 
 \begin{pmatrix}
0&c_{1}&0 \\
0&c_{2}&0 \\
c_{1}&0&c_{3} 
\end{pmatrix}
.\]

The difference here is that $T(A)$, $T(B)$, $T(C)$ each individually represent $T$, but the entire set $\{T^{1},T^{2},T^{3}\}$ also represents $T$.  One advantage to considering a tensor as a matrix in linear forms is that it is now clear that $\Prank(T) =(3,3,2)$, so $X \subset \Prank^{3,3,2}$. In particular, $T(C)$ has rank 2, and thus must satisfy the equations implied by $\det(T(C)) \equiv 0$, while $T(A)$ and $T(B)$ do not satisfy this relation.

\begin{remark}The construction of the matrix $T(A)$ from the tensor $T$ shows that the $G$ action on $T$ corresponds to an action on the matrix $T(A)$ by left and right multiplication by elements of $\SL(B)$ and $\SL(C)$, and by linear changes of variables on the entries of $T(A)$, with similar descriptions for the action on $T(B)$ and $T(C)$.
\end{remark}

Nurmiev lists the $G$-orbits in $V$ as a list of integers. 
To a triple of integers $ijk$ Nurmiev associates the tensor $e_{i}\otimes e_{j}\otimes e_{k}$, with $1\leq i, j-3, k-6 \leq 3$. The spaces in each expression corresponds to summation.

For example, consider orbit $11$ on Nurmiev's list: $149\; 167\; 248\; 357$. 
We choose bases of $A^{*}$, $B^{*}$ and $C$ so that $a_{i}=e_{i}$, $b_{j-3} = e_{j}$, and $c_{k-6}=e_{k}$.  So orbit 11 corresponds to the tensor
\[
a_{1}\otimes b_{1}\otimes c_{3}+ a_{1}\otimes b_{3}\otimes c_{1}+ a_{2}\otimes b_{1}\otimes c_{2}+ a_{3}\otimes b_{2}\otimes c_{1},
\]
which corresponds to the matrix of linear forms
\[
\begin{pmatrix}
0 & a_{2} & a_{1}\\
a_{3} & 0 & 0 \\
a_{1} & 0 & 0
\end{pmatrix}
.\]

Finally, notice that $T(C)$, for instance, can be moved by a change of coordinates to
\[T(C) = 
 \begin{pmatrix}
0&c_{1}&0 \\
0&c_{2}&0 \\
c_{1}&0&c_{3} 
\end{pmatrix}
\cong
 \begin{pmatrix}
0&0&c_{1} \\
0&0&c_{2}\\
c_{1}&c_{3}&0 
\end{pmatrix}
\cong
 \begin{pmatrix}
0&c_{1}&c_{3}\\
c_{1}&0&0 \\
c_{2}&0&0
\end{pmatrix}
\cong 
\begin{pmatrix}
0&c_{3}&c_{1}\\
c_{2}&0&0 \\
c_{1}&0&0
\end{pmatrix}
.\]
Then swapping the roles of $c_{2}$ and $c_{3}$ we obtain
\[T(C)
\cong
\begin{pmatrix}
0&c_{2}&c_{1}\\
c_{3}&0&0 \\
c_{1}&0&0
\end{pmatrix}
.\]
This shows that the normal form of $T$ is congruent to orbit $11''$ on Nurmiev's list (where representative $11''$ is obtained from representative $11$ by performing the permutation $a\to b\to c\to a$ twice).

Nurmiev's list also contains the dimension of the stabilizer of this orbit.  This confirms for us that the codimension of the trifocal variety $X$ is 8 (an already well-established fact).

One of the utilities of having a group action is the following. If a group $G\subset GL(V)$ preserves a variety $X\subset \PP V$ (i.e. $G.X =X$), we may consider $I(X)$ as a $G$-module and in particular as a $G$-submodule of $S^{\bullet}V^{*},$ the space of symmetric tensors on $V$.  Recall that $S^{\bullet}V^{*}$ is isomorphic to the space of homogeneous polynomials on $V$.  The representation theory of  $G = \SL(A)\times \SL(B)\times \SL(C)$-modules is well known; however, the reader may wish to consult \cite{LandsbergTensorBook} or \cite{FultonHarris} for reference.  One fact we will use is if $V=A^{*}\otimes B^{*}\otimes C$, then irreducible $G$-modules in $S^{\bullet}V^{*}$ are all of the form $S_{\lambda}A\otimes S_{\mu}B\otimes S_{\nu}C^{*}$, where $\lambda,\mu$ and $\nu$ are all partitions of the same nonnegative integer.

\section{$\Frank$ and $\Prank$ varieties}\label{sec:other varieties}

In the previous section we saw that the matrices $T^{1},T^{2},T^{3}$ representing the slices of a trifocal tensor do not have full rank. This condition depends on the choice of coordinates. On the other hand, the $3\times 9$ flattening matrix $( T^{1}\mid T^{2}\mid T^{3})$ does have full rank, and this condition is not dependent on the choice of coordinates. Of course slicing in a different direction may yield a different result, but it is easy to check that trifocal tensors have full rank flattenings for all slices.   We refer to this condition as \emph{flattening rank ($\Frank$)}, and note that the general trifocal tensor has $\Frank(T)=(3,3,3)$.

The matrix $T(C)$ with linear forms in $C$ does not have full rank, while the matrices $T(A)$ and $T(B)$ do have full rank.  The construction of $T(C)$ describes a projection $A^{*}\otimes B^{*}\otimes C \to A^{*}\otimes B^{*}$,
so it is natural to refer to the tuple of ranks of the various projections as \emph{projection rank} ($\Prank$).  A general trifocal tensor $T$ has $\Prank(T)=(3,3,2)$.   

These two considerations lead to the study of subspace varieties (the former) and rank varieties (the latter).  Understanding algebraic and geometric properties of these varieties will help us find equations for the trifocal variety. In what follows we highlight some of these properties, which are specific cases of much more general constructions. For more details, see \cite[Chapter 7]{LandsbergTensorBook}).

\subsection{Subspace varieties}
For $p\leq 3$, $q\leq 3$, $r\leq 3$, the \emph{subspace variety} $\Sub_{p,q,r} \subset \PP V$ is the projectivization of the set of tensors that have $\Frank$ at most $(p,q,r)$:
\[\Sub_{p,q,r} = \PP\{T\in V \mid \Frank(T) \leq (p,q,r)  \}
,\]
where we write $(a,b,c) \leq (p,q,r)$ if $a\leq p$ and $b\leq q$ and $c\leq r$.
Subspace varieties are irreducible, and their ideals are defined by minors of flattenings (see \cite[Theorem 3.1]{Landsberg-Weyman-Bull07}).  For the sake of a reader unfamiliar with these concepts, we recall the construction of these equations.

A tensor $T\in V$ is realized via 27 variables $T_{i,j,k}$ for $1\le i,j,k\le 3$
\[T = \sum_{i,j,k=1}^{3} T_{i,j,k}a_{i}\otimes b_{j} \otimes c_{k},\]
choosing bases $\{a_{1},a_{2},a_{3}\}$, $\{b_{1},b_{2},b_{3}\}$, and $\{c_{1},c_{2},c_{3}\}$ of $A^{*}$, $B^{*}$ and $C$ respectively.
There are three directions in which we may slice $T$ to get triples of matrices.
Let $W_{i} = (T_{i,j,k})_{j,k}$, $Y_{j} = (T_{i,j,k})_{i,k}$, $Z_{k} = (T_{i,j,k})_{i,j}$ denote these slices.
Then the matrices $W=(W_{1}\mid W_{2}\mid W_{3})$ --- respectively $Y=(Y_{1}\mid Y_{2}\mid Y_{3})$,  and $Z=(Z_{1}\mid Z_{2}\mid Z_{3})$ --- are the three flattenings with respect to the three slicings of the tensor $T$.

A special case of \cite[Theorem 3.1]{Landsberg-Weyman-Bull07} is that the ideals generated by the 3-minors of flattenings are the ideals of subspace varieties. Namely,
\[
I(\Sub_{2,3,3} ) =\langle minors(3,W) \rangle,
\]
\[I(\Sub_{3,2,3} ) =\langle minors(3,Y) \rangle,
\]
\[I(\Sub_{3,3,2} ) =\langle minors(3,Z)\rangle.
\]
Moreover, the intersection of two subspace varieties yields another, and this holds ideal theoretically as well.
$\Sub_{2,2,3} = \Sub_{2,3,3} \cap \Sub_{3,2,3}$ and
\[
I(\Sub_{2,2,3} ) =\langle minors(3,W) \rangle + \langle minors(3,Y) \rangle,
\]
and similarly for permutations. 
Likewise 
\[
I(\Sub_{2,2,2} ) =\langle minors(3,W) \rangle +\langle minors(3,Y) \rangle + \langle minors(3,Z)\rangle
.\]
It is also easy to check the dimensions of subspace varieties. A convenient tool is to use the Kempf-Weyman desingularization via vector bundles.  Let $\mathcal{S}_{i}$ denote the canonical (subspace) rank $i$ vector bundle over the Grassmannian $\Gr(i,n)$.  The desingularization is 
\[
\PP(\mathcal{S}_{p}\otimes \mathcal{S}_{q}\otimes \mathcal{S}_{r})
\times \Gr(p,3)\times \Gr(q,3) \times \Gr(r,3) \dashrightarrow \Sub_{p,q,r}
.\]
In particular, we have
\[
\dim(\Sub_{p,q,r}) = pqr-1 + p(3-p) + q(3-q) +r(3-r)
.\]
We computed the degrees of each subspace variety using Macaulay 2:
\begin{center}
\begin{tabular}{lccc}
\text{variety}&$\Sub_{2,3,3}$ & $\Sub_{2,2,3}$ & $\Sub_{2,2,2}$ \\
\text{dimension}&19 & 15  &13   \\
\text{codimension}&7 & 11  &13   \\
\text{degree}&36 & 306  & 783   \\
\end{tabular}
\end{center}

Another description of $I(\Sub_{p,q,r})$ in Representation Theoretic language will allow us to compare the equations here with any other $G$-invariant sets of equations, no matter how they are presented. Another way to state \cite[Theorem 3.1]{Landsberg-Weyman-Bull07} is that for each integer $d$, $I(\Sub_{p,q,r})_{d}$ consists of all representations $S_{\lambda}A \otimes S_{\mu} B \otimes S_{\nu} C^{*}$ with partitions $\lambda, \mu, \nu \vdash d$ such that either $|\lambda|>p$, $|\mu| >q$ or $|\nu| >r$.

In fact, the ideals of subspace varieties are generated in the minimal degree $d$ possible. To save space, we write $S_{\lambda}S_{\mu}S_{\nu}$ for the representation $S_{\lambda}A\otimes S_{\mu}B\otimes S_{\nu}C^{*}$, and $\bw{3}$ in place of $S_{111}$
\[I(\Sub_{2,3,3}) =\langle
 \bw{3}  \bw{3} S^{3} \oplus
 \bw{3} S_{21} S_{21} \oplus
 \bw{3} S^{3}  \bw{3}
\rangle
\]
\[ I(\Sub_{223}) =I(\Sub_{233}) + \langle 
 S_{21}  \bw{3} S_{21} \oplus
 S^{3}  \bw{3}  \bw{3} 
\rangle\]
\[ I(\Sub_{2,2,2}) = I(\Sub_{2,2,3})+ \langle 
S_{21} S_{21}  \bw{3}
\rangle
\]

Finally, comparing to Nurmiev's list \cite{Nurmiev2}, the variety $\Sub_{2,3,3}$ corresponds to nilpotent orbit 9 (and $9'$ and $9''$ correspond to permutations of $\Sub_{2,3,3}$). The variety $\Sub_{2,2,3}$ corresponds to nilpotent orbit number 17 (and $17'$ and $17''$ for permutations).
$\Sub_{2,2,2}$ is also equal to the secant variety of a Segre product, $\sigma_{2}({\rm \Seg}(\PP^{2}\times \PP^{2}\times \PP^{2}))$ and corresponds to nilpotent orbit number 20 on Nurmiev's list.

\subsection{$\Prank$ varieties}
$\Prank$ varieties 
are defined by considering the three images of the projections of a tensor onto two of the factors and restricting the rank of points in the image. In particular (see \cite[\S7.2.2]{LandsbergTensorBook}) $\Rank^{r}_{A} $ is the projectivization of the set
\[ \{T \in V \mid \rank(T(A))\leq r\}
,\]
where if $T$ has slices (in the $A^{*}$-direction) $W_{1}$, $W_{2}$ and $W_{3}$,  $T(A)$ is the matrix $W_{1}+W_{2}+W_{3}$. $\Rank^{r}_{B}$ and $\Rank^{r}_{C}$ are defined similarly.  Let $\Prank^{p,q,r}$ denote the projectivization of the set of tensors $T$ with $\Prank(T) \leq (p,q,r)$.  Equivalently,  
\[\Prank^{p,q,r} =\Rank^{p}_{A} \cap \Rank^{q}_{B} \cap \Rank^{r}_{C} .\]
It is easy to check that $\Prank^{p,q,r}$ is $\SL(A)\times \SL(B)\times \SL(C)$-invariant.

While $\Prank$ varieties can be considered in arbitrary dimensions, we restrict to the case that $A$, $B$ and $C$ are 3-dimensional.

Landsberg points out that $\Rank^{r}_{A}$ is usually far from irreducible. 
In particular, there are at least two subspace varieties in $\Rank^{2}_{A}$
\[
\Sub_{3,3,2} \cup \Sub_{3,2,3} \subset \Rank^{2}_{A} = \Prank^{2,3,3}
.\]
In fact, we will see later that there is yet another component.
Similarly 
\[
\Sub_{3,3,2} \cup \Sub_{3,2,3} \subset  \Prank^{2,3,3},
\]
\[
\Sub_{3,3,2} \cup \Sub_{2,3,3} \subset  \Prank^{3,2,3}
\]
imply a third containment $\Sub_{3,3,2}\cup \Sub_{2,2,3} \subset \Prank^{2,2,3}$.

Moreover the 3-way intersection certainly contains the following
 \[\Sub_{3,2,2} \cup \Sub_{2,3,2}  \cup \Sub_{2,2,3}  \subset \Prank^{2,2,2}.\]
But, in fact, all of the inclusions above are strict containments.

In Section \ref{sec:Nurmiev} we consider the poset of orbit closures in $\Prank^{3,3,2}$.  This will allow us to show that $\Prank^{2,2,2}$ is irreducible and corresponds to the orbit closure consisting of tensors whose projections $T(A)$, $T(B)$ and $T(C)$ are skew-symmetrizable $3\times 3$ matrices.
One can also show that the irreducible components of $\Prank^{3,2,2}$ are $\Sub_{3,3,2}$ and $\Prank^{2,2,2}$ (which contains  $\Sub_{2,2,3}$).
Further, we'll see that $\Prank^{3,3,2}$ consists of four distinct components, namely the two subspace varieties $\Sub_{2,3,3}\cup \Sub_{3,2,3}$, the trifocal variety $X$, and $\Prank^{2,2,2}$.

\section{Symbolic computations using Representation Theory}\label{sec:rep}

In this section we compute the trifocal ideal $I(X)$ up to degree 9, and then find the minimal generators among those polynomials. 

A systematic algorithm to compute all polynomials in low degree  in the ideal of a $G$-variety is described in \cite{Landsberg-Manivel04}.  We carry out this algorithm for the trifocal variety.
In a word, the test is to decompose the ambient coordinate ring as a $G$-module and check every module of equations in low degree for membership via representation theory.

Recall that the trifocal variety $X$ is the closure of a $G$-orbit in $\PP V$, with $G=\SL(A)\times \SL(B)\times \SL(C)$.  This allows us to use tools from representation theory to compute and understand the ideal of $X$.

The coordinate ring of $\PP V$ has a $G$-module decomposition in each degree as
\[
S^{d}V^{*} = \bigoplus_{\lambda,\mu,\nu\vdash d} (S_{\lambda}A\otimes S_{\mu}B \otimes S_{\nu}C^{*})\otimes \CC^{m_{\lambda,\mu,\nu}}
,\]
where $S_{\lambda}A\otimes S_{\mu}B \otimes S_{\nu}C^{*}$ is an isotypic module associated to partitions $\lambda,\mu,\nu$ of $d$, and $m_{\lambda,\mu,\nu}$ is the multiplicity of that isotypic component, \cite[Proposition~4.1]{Landsberg-Manivel04}. One advantage to considering polynomials in modules is that we can compare different sets of polynomials no matter in what basis they are presented and determine if they are the same.

Even more useful is the following. Because we have a reductive group acting, we have the following splitting as $G$-modules
\[
k[V] = I(X) \oplus k[V]/I(X)
.\] To determine $I(X)$, then we just need to determine which irreducible $G$-modules are in $I(X)$. 

Here is a short synopsis of the algorithm in \cite{Landsberg-Manivel04}. Suppose $M$ is an irreducible $G$-module in $S^{\bullet}V^{*}$. To determine whether $M\subset I(X)$ or $M\subset k[V]/I(X)$, it suffices to check whether a random point on $X$ vanishes on the highest weight vector of $M$.  Random points of $X$  may be constructed by acting on a normal form by random elements of $G$.  If $M$ is an isotypic component of $S^{\bullet}V^{*}$ that occurs with multiplicity $m$, we first construct a basis of the highest weight space $\CC^{m}$ of $M$, by a straightforward construction involving Young symmetrizers. Then we select $m$ random points from $X$ and construct an $m\times m$ matrix  whose $i,j$-entry is the $i$th point evaluated on $j$th basis vector.  The kernel of this matrix tells the linear subspace of $M$ that is in the ideal of $X$.

Of course because we use random points, we should then re-verify all vanishing results symbolically to rule out false positives (there are no false negatives because non-vanishing at random points of $X$ implies non-vanishing on $X$.)  We did these extensive computations in Maple, and we have provided a sample of our code in the ancillary files accompanying the arXiv version of this article.

We can further determine which among the polynomial modules we find are actually minimal generators.
Suppose the degree $d$ piece of the ideal, $I(X)_{d}$, is known and has been input into Macaulay2 as \verb|I|.
  Then in the next step of the Landsberg-Manivel algorithm, for each new highest weight vector $f$ (a polynomial of degree $d+1$) check if $f \in \langle I(X)_{d}\rangle$ by quickly computing \verb|f%I|.
The module $\{G.f\}$ associated to the highest weight vector $f$ is in $\langle I(X)_{d}\rangle$ if and only if \verb|f%I| is zero.

We tabulate the results of this test applied to the trifocal variety below.
Again, to save space we 
write $S_{\lambda}S_{\mu}S_{\nu}$ in place of $S_{\lambda}A\otimes S_{\mu}B\otimes S_{\nu} C^{*}$.   The trifocal variety has an $\mathfrak{S}_{2}$ symmetry permuting the $A^{*}$ and $B^{*}$ factors.  To further save space, where appropriate, we will write $\mathfrak{S}_{2}.(S_{\lambda}S_{\mu})S_{\nu}$ to indicate the sum $S_{\lambda}S_{\mu}S_{\nu} + S_{\mu}S_{\lambda}S_{\nu}$.

\begin{prop}
Let $X$ denote the trifocal variety in $\PP V$ and let $M_{d}$ denote the space of minimal generators in degree $d$ of $I(X)$.  There are 10 minimal generators in degree 3, 81 in degree 5, and 1980 in degree 6.  The $G$-module structure of the minimal generators is as follows.
\[
\quad M_3 = \bw{3}\bw{3} S^{3},\]
\[\quad
M_{5}=(S_{221} S_{221})( S_{311} \oplus S_{221} )\]
\[M_{6}=
\left(\mathfrak{S}_{2}.(S_{222} S_{33}) ( S_{33} \oplus  S_{411})\right)
\oplus \mathfrak{S}_{2}.(S_{33} S_{321}) S_{321}
\]
\[
\oplus \left(\mathfrak{S}_{2}.(S_{33} S_{411})\oplus S_{33} S_{33}\right) S_{222}
,\]
and there are no other minimal generators in degree $\leq 9$.
\end{prop}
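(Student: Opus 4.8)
The plan is to run the Landsberg--Manivel algorithm \cite{Landsberg-Manivel04} for the orbit closure $X$ in every degree $d$ from $3$ to $9$, together with the minimality test described just above the proposition, so that the whole argument is a finite (if large) computation organized by representation theory. The first step is purely combinatorial: for each $d\le 9$ I compute the $G$-module decomposition
\[
S^{d}V^{*} = \bigoplus_{\lambda,\mu,\nu\vdash d}\bigl(S_{\lambda}A\otimes S_{\mu}B \otimes S_{\nu}C^{*}\bigr)^{\oplus m_{\lambda,\mu,\nu}},
\]
where the $m_{\lambda,\mu,\nu}$ come from the plethysm of $S^{d}(A^{*}\otimes B^{*}\otimes C)$ via Littlewood--Richardson / Kronecker coefficients; only partitions into at most three parts contribute, so this terminates.

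Next I test membership in $I(X)$ isotypic-component by isotypic-component. Fix the normal form $T_{0}$ of the trifocal orbit from Section \ref{sec:trifocal}. For a component $M\cong\bigl(S_{\lambda}A\otimes S_{\mu}B\otimes S_{\nu}C^{*}\bigr)^{\oplus m}$ I build a basis $f_{1},\dots,f_{m}$ of its highest-weight space using Young symmetrizers, choose (at least) $m$ random group elements $g_{i}\in G$, and form the matrix with entries $f_{j}(g_{i}.T_{0})$. Since $X=\overline{G.T_{0}}$, a highest-weight vector vanishes on $X$ if and only if it vanishes at generic $g.T_{0}$, so the kernel of this matrix is exactly the highest-weight space of the largest $G$-submodule of $M$ lying in $I(X)$; there are no false negatives. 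Collecting these kernels over all components of $S^{d}V^{*}$ gives a candidate for $I(X)_{d}$ for each $d\le 9$; I then eliminate the false positives by re-verifying, symbolically, that each flagged highest-weight vector vanishes identically when the $27$ coordinates are replaced by the parametric coordinates of $g.T_{0}$ with $g$ a symbolic element of $G$. This pins down $I(X)_{3},\dots,I(X)_{9}$ as explicit $G$-modules; in particular $I(X)_{3}=\bw{3}\bw{3}S^{3}$ is precisely the $10$-dimensional span of the coefficients of $\det(x_{1}Z_{1}+x_{2}Z_{2}+x_{3}Z_{3})$.

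To extract the minimal generators I induct on $d$. Having computed $I(X)_{3},\dots,I(X)_{d}$, let $J_{d}$ be the ideal they generate. For each highest-weight vector $f$ spanning an irreducible summand of $I(X)_{d+1}$, I test $f\in J_{d}$ by reducing $f$ modulo a Gröbner basis of $J_{d}$ in Macaulay2; by $G$-equivariance the entire summand $\{G.f\}$ lies in $J_{d}$ iff the reduction is zero, and otherwise $\{G.f\}$ is a new minimal generator in degree $d+1$. Carrying this out for $d+1=4,\dots,9$ shows new generators appear only in degrees $3,5,6$, yielding the modules $M_{3},M_{5},M_{6}$ of the statement; their dimensions $10$, $81$, $1980$ then follow from Weyl's dimension formula applied to the listed Schur functors of $\CC^{3}$, and the $\mathfrak{S}_{2}$-symmetry permuting $A^{*},B^{*}$ both halves the work and organizes the lists. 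As a consistency check, the Gröbner basis of $\langle M_{3}+M_{5}+M_{6}\rangle$ has degree $297$.

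The algorithmic skeleton is routine; the real obstacle is computational scale. In degrees $7,8,9$ the coordinate ring $S^{d}V^{*}$ has very large dimension with many isotypic components of substantial multiplicity, so building and row-reducing the evaluation matrices — and especially the symbolic re-verification over the $27$ variables — is expensive, and the Gröbner basis of $\langle M_{3}+M_{5}\rangle$ needed for the degree-$6$ minimality test is itself heavy. Managing these computations (exploiting the weight grading and the $\mathfrak{S}_{2}$-action to shrink the linear algebra, choosing good term orders, and certifying that the random $g_{i}$ are generic enough that the evaluation matrices have the correct rank so no genuine relation is missed) is where essentially all the difficulty lies.
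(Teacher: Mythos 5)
Your proposal is correct and follows essentially the same route as the paper: the Landsberg--Manivel evaluation-at-random-points test on highest weight vectors for each isotypic component of $S^{d}V^{*}$ with $d\le 9$, symbolic re-verification to remove false positives, and the Gr\"obner reduction $f \in \langle I(X)_{d}\rangle$ test for minimality, exploiting the $\mathfrak{S}_{2}$-symmetry. The paper's own proof is exactly this computation, carried out in Maple and Macaulay2 with the code supplied in the ancillary files.
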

By recording the dimension of all modules that occur, this computes the first nine values of the Hilbert function of $k[V]/I(X)$:
\[
{27, 378, 3644, 27135, 166050, 865860, 3942162, 15966072, 58409126}
.\]

During our tests in Maple, we computed a basis of each module and provide these equations in the ancillary files accompanying the arXiv version of this paper.

We relate some of the polynomials we found to the known polynomials in \cite{Alzati-Tortora}.
Landsberg proves that $\Rank^{r}_{A}$ is the zero-set of $S^{r+1}A\otimes  \bw{r+1}B \otimes  \bw{r+1}C^{*} $ \cite[Proposition 7.2.2.2]{LandsbergTensorBook}. One can also phrase the condition that $T\in \Rank^{r}_{A}$ as the requirement that the matrix $T(A)$ of linear forms from $A$ has rank not exceeding $r$. If $A$ is $m$-dimensional, a basis of the module $S^{r+1}A\otimes  \bw{r+1}B \otimes  \bw{r+1}C $ is given as follows. Consider the slices $T^{1},\dots, T^{m}$ of $T$ in the $A$-direction, and dummy variables $x_{1},\dots,x_{m}$. The condition that $\rank(\sum_{i=1}^{m}x_{i}T^{i})\leq r$ is the condition that all coefficients (on the $x_{i}$) of the $(r+1)\times (r+1)$ minors of the matrix $\sum_{i=1}^{m}x_{i}T^{i}$ vanish. So a basis of $S^{r+1}A\otimes  \bw{r+1}B \otimes  \bw{r+1}C $ is given by the (polynomial) coefficients of these minors.

Recall that a normal form for a point $T$ on the trifocal variety has 
\[
T(C)
\cong
\begin{pmatrix}
0&c_{2}&c_{1}\\
c_{3}&0&0 \\
c_{1}&0&0
\end{pmatrix}.\]
And this matrix clearly has rank $\leq2$.

The above discussion implies that $X\subset \Rank^{2}_{C}$, and 
the module 
\[M_3:= \bw{3} A \otimes \bw{3} B\otimes S^3 C^{*}
\]
is in the trifocal ideal. 
These equations were also identified in \cite{Alzati-Tortora}.

We now describe the two modules in $M_{5}$.
The highest weight vectors are polynomials with (respectively) 104 and 244 monomials and multi degrees [(2,2,1),(2,2,1),(3,1,1)]  and 
[(2,2,1),(2,2,1),(2,2,1)], in the ring
\[
k[a_{11},\dots, a_{33}, b_{11},\dots, b_{33}, c_{11},\dots, c_{33}].
\]
Here we are using $a_{ij}$ to denote $T_{ij1}$, $b_{ij}=T_{ij2}$, and $c_{ij}=T_{ij3}$.
Typical terms of the highest weight vectors are
\begin{multline*}
\dots -b_{31}c_{22}a_{13}b_{12}a_{11}+b_{31}a_{12}^2 b_{23}c_{11}-b_{31}c_{21}a_{12}^2b_{13}\\
+b_{22}c_{31}a_{13}a_{12}b_{11}-b_{22}c_{31}a_{11}b_{13}a_{12}-a_{32}c_{22}b_{11}^2a_{13}\dots
\end{multline*}
and
\begin{multline*}\dots
-a_{21}^2b_{12}b_{33}c_{12}-2a_{12}a_{33}b_{11}b_{22}c_{21}-a_{21}^2b_{12}b_{13}c_{32}
\\+a_{12}a_{23}b_{11}b_{21}c_{32}+a_{21}^2b_{12}^2c_{33} \dots,
\end{multline*}
respectively.
The basis of $S_{221}S_{221}S_{221}$ consists of 27 polynomials which are all equal after a change of indices. All the coefficients come from the set $\{-5, -2, -1, 1, 2, 4\}$.
The basis of $S_{221}S_{221}S_{311}$ consists of 54 polynomials which are of two different types having either 104 or 64 monomials and coefficients in the set $\{-1,1\}$. The polynomials themselves can be downloaded from the web as mentioned above.

For $M_6$ we can give a similar description.
$S_{222}S_{33}S_{33}$ and $S_{222}S_{33}S_{411}$ and $S_{33}S_{222}S_{411}$ are all 100-dimensional, each with a basis consisting of polynomials that have between 66 and 666 monomials and small (absolute value no greater than $4$) integer coefficients.
Similarly
$S_{321}S_{33}S_{321}$ is 640-dimensional with a basis consisting of polynomials that have between 60 and 732 monomials and small integer coefficients.
The full set of polynomials is available with the ancillary files on the arXiv version of the paper.

After computing  $I(X)_{d}$ for small $d$, we computed a Gr\"obner basis of $J = \langle M_{3}+M_{5}+M_{6}\rangle$ in Macaulay2.  Surprisingly, this computation finished in a few minutes --- it actually took longer to load the polynomials into M2 than it took to compute the Gr\"obner basis. We were also able to compute a Gr\"obner basis of $\mathfrak{S}_{3}.M_{3}=\left(\bw{3}\bw{3} S^{3}\right)\oplus \left( \bw{3} S^{3}\bw{3}\right)\oplus  \left(S^{3}\bw{3}\bw{3}\right)$, the 30 cubic equations defining the rank variety $\Prank^{2,2,2}$.  We record the results of these computations:

\begin{prop}\label{prop:m2} 
\label{prop:low-deg-computations} Let $X$ denote the trifocal variety and let $M_{d}$ denote the space of minimal generators in $I(X)_{d}$.
The following computations done over $\mathbb{Q}$ hold:
\[\deg(\V(\mathfrak{S}_{3}.M_{3})) = 1035 \text{ and }\codim(\V(\mathfrak{S}_{3}.M_{3})) = 10.\]
\[\deg(\V(M_{3}+M_{5}+M_{6})) = 297 \text{ and } \codim(\V(M_{3}+M_{5}+M_{6})) = 8.\]
\end{prop}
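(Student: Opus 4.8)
The statement is purely computational: it asserts the values of the degree and codimension of two explicit projective schemes cut out by known polynomial modules in $k[T_{ijk}]$. The plan is therefore to reduce each claim to a Gröbner basis computation and then to read off the Hilbert polynomial. First I would assemble, in Macaulay2, the ideal generators explicitly: for $\V(\mathfrak{S}_{3}.M_{3})$ this is the span of the 30 cubic polynomials that are the coefficients (on the dummy variables $x_1,x_2,x_3$) of $\det(x_1 Z_1 + x_2 Z_2 + x_3 Z_3)$ and its two cyclic permutations in the three slicing directions; for $\V(M_3+M_5+M_6)$ one additionally loads the $81$ quintics spanning $(S_{221}S_{221})(S_{311}\oplus S_{221})$ and the $1980$ sextics spanning the module $M_6$ recorded in the proposition (these bases are precisely the polynomials provided in the ancillary files). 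All computations are done over $\mathbb{Q}$, as stated, so that the results are characteristic-free enough to be trusted and there is no risk of a coincidental degeneration over a small finite field.

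Next I would compute a Gröbner basis of each ideal with respect to a degree-reverse-lexicographic order — the most efficient choice for computing Hilbert series — and extract the Hilbert polynomial of the quotient $k[T_{ijk}]/J$. The dimension of the projective scheme is one less than the degree of this Hilbert polynomial, which gives the codimension ($10$ and $8$ respectively, matching the expected codimensions of $\Prank^{2,2,2}$ and of $X$), and the leading coefficient of the Hilbert polynomial, multiplied by the appropriate factorial, yields the degree ($1035$ and $297$ respectively). It is worth noting that the degree $297$ already appeared independently via the Bertini numerical primary decomposition in Section~\ref{sec:Bertini}, which provides an external consistency check on the symbolic computation; likewise the codimension $8$ agrees with the well-established codimension of $X$ deduced from the stabilizer dimension in Nurmiev's list.

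The main obstacle is simply the feasibility of the Gröbner basis computation in $27$ variables with $\sim 2000$ generators of degree up to $6$. In general such a computation can be intractable, but as remarked in the text the Gröbner basis of $J = \langle M_3 + M_5 + M_6\rangle$ in fact terminates in a few minutes — loading the polynomials takes longer than the computation — so the obstacle turns out to be benign here; the high degree of symmetry (the $G$-module structure and the $\mathfrak{S}_2$ or $\mathfrak{S}_3$ action) keeps the intermediate bases small. The only genuine care needed is bookkeeping: verifying that the input polynomials really do span the claimed modules (done during the representation-theoretic computation of Section~\ref{sec:rep}), and confirming that the Hilbert-polynomial extraction is applied to the saturated/unsaturated ideal consistently with how ``$\V(-)$'' is being interpreted — but since we only report $\deg$ and $\codim$, which depend only on the top-dimensional behavior, these subtleties do not affect the stated values.
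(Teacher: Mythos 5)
Your proposal matches the paper's proof, which is exactly the Macaulay2 Gr\"obner basis computation over $\mathbb{Q}$ (with the generators supplied in the ancillary files) from which the degree and codimension are read off the Hilbert polynomial. Your additional remarks on feasibility and consistency checks are accurate but the underlying argument is the same.
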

\begin{proof}The proof is by computations in Macaulay2 \cite{M2} that we provide with the ancillary files in the arXiv version of the paper.\end{proof}

Though we don't need it for our proof, we were also able to compute the Hilbert polynomial of $J=\langle M_{3}+M_{5}+M_{6}\rangle$:
\begin{multline*}
69 {P}_{5}-423 {P}_{6}+882 {P}_{7}-204 {P}_{8}-2565 {P}_{9}
\\
+5751{P}_{10}-6129 {P}_{11}+3402 {P}_{12}-783 {P}_{13}+100 {P}_{14}
\\
-525{P}_{15}+1038 {P}_{16}-909 {P}_{17}+297 {P}_{18},
\end{multline*}
where we use the variables $P_{i}$ following the standard normalization used in Macaulay2 to describe the Hilbert Polynomial.

\section{Numerical Algebraic Geometry: Bertini}\label{sec:Bertini}
In Numerical algebraic geometry, and specifically using the program Bertini, one can compute numerical primary decompositions of ideals if the number of equations and the degrees of those equations are relatively small. 
 In contrast to Gr\"obner basis computations where typically more equations is better, in numerical algebraic geometry it is better to start with the lowest degree and lowest number of equations that one can understand. Then one can try to compute a numerical primary decomposition and attempt to work by other means to obtain a geometric description of the components indicated by Bertini.

Following this philosophy, we started with the 10 equations in degree 3 given by the complete vanishing of
\[
\det(x_{1}Z_{1} +x_{2}Z_{2} + x_{3}Z_{3})
,\]
which define $\Rank_{C}^{2}$.
Recall that $(Z_1\mid Z_2\mid Z_3)$ is the flattening of a tensor in the $C$-direction.  These are specifically the 10 equations defining the module $M_3$; that is, a basis for $M_3$.

After about 6.5 hours of computational time on 2 processors (and some help from J. Hauenstein getting the initial parameters right), or just under 10 minutes on Hauenstein's cluster, Bertini succeeded to compute the following numerical decomposition.
\begin{computation}\label{comp:Bertini}
Let $M_{3}$ denote the 10 coefficients (in $x_{1},x_{2},x_{3}$) of the cubic $\det(x_{1}Z_{1} +x_{2}Z_{2} + x_{3}Z_{3})$.
Up to the numerical precision of Bertini, the zero set of $M_{3}$ has precisely 4 components:

In codimension 7 there are 2 components, each of degree 36.

In codimension 8 there is 1 component of degree 297.

In codimension 10 there is 1 component of degree 1035.
\end{computation}
It is not too hard to show that the two components in codim 7 are the subspace varieties
\[\Sub_{3,2,3} \cup \Sub_{2,3,3}\] 

This is because they have the correct dimension, $M_3$ is in both ideals, and their ideals are generated (respectively) by
\[(\bw{3}\bw{3}S^{3}) \oplus (S^{3}\bw{3}\bw{3})\qquad \text{ and}\qquad (\bw{3}\bw{3}S^{3}) \oplus (\bw{3}S^{3}\bw{3}).\]

We know that the trifocal tensor variety is in the zero set and has codimension 8. It is not contained in either subspace variety, so we may conclude that $X$ corresponds to the codimension 8 component in the numerical decomposition.  We also learn that $X$ has degree 297.

The variety $\Prank^{2,2,2}$ must correspond to the codimension 10 component, which we prove in Proposition \ref{prop:F} below. In addition, this Bertini computation also tells that $\Prank^{2,2,2}$ has degree 1035.

\section{Nurmiev's classification of orbit closures and the proof of the main theorem}\label{sec:Nurmiev}
The orbits of $\SL(3)^{\times 3}$ acting on $\CC^{3}\otimes \CC^{3}\otimes \CC^{3}$ have been classified by Nurmiev \cite{Nurmiev}, who also computed the closure of most orbits.

Using Nurmiev's list of normal forms, we can quickly check which orbits are contained in $\V(M_{3})$ by taking a parameterized representative for each orbit (normal form) and evaluating the polynomials in $M_3$ on that representative.  This can be carried out in a straightforward manner in any computer algebra system.

The following orbits from Nurmiev's list of nilpotent orbits \cite[Table 4]{Nurmiev2} are in $\V(M_{3})$:
$9$, $9'$, $11''$, $12$, $12'$, $13$, $13'$, $14$, $15$, $15'$, $16$, $16'$, $17$, $17'$, $17'',$
$18$, $18'$, $18''$, $19$, $19'$, $19''$, $20\ (=20'=20'')$, $21$, $21'$, $21''$, $22$, $22'$, $23$, $23'$, $23'',$
$24\ (=24'=24'')$, $25\ (=\emptyset)$,
see \cite{Nurmiev} for an explanation of the notation used.

After considering the nilpotent orbits, we must also consider the semi-simple orbits together with their nilpotent parts.  Among these orbits, our tests found that only one semi-simple orbit, namely the one corresponding to Nurmiev's fourth family, is in  $\V(M_{3})$. In our notation we may represent this normal form as
\begin{multline*}
F=\lambda(a_1\otimes b_2\otimes c_3+a_2\otimes b_3\otimes c_1+a_3\otimes b_1\otimes c_2\\
-a_1\otimes b_3\otimes c_2-a_2\otimes b_1\otimes c_3-a_3\otimes b_2\otimes c_1),\end{multline*}
for any scalar $\lambda\neq 0$, but over the complex numbers this scalar may be absorbed.

\begin{remark}
As a matrix with linear entries either in $A^{*}$, $B^{*}$ or $C$ a normal form for $F$ is always of the form
\[
\begin{pmatrix}
0 & x & -y \\ -x& 0 & z \\ y&-z &0
\end{pmatrix}
.\]
Namely this orbit corresponds to the skew symmetric matrices. 
Moreover, since this matrix is skew-symmetric, it always has even rank, and thus we find that $F\in \Prank^{2,2,2}$ with no computation necessary.
\end{remark}

Next we consider the closures of all the nilpotent orbits in \cite[Table 4]{Nurmiev2}.  Our inclusion poset diagram in Figure \ref{fig:graph} is enlightening. For all arrows except for those emanating from $F$, the diagram is a restatement of results in \cite{Nurmiev2}.  Namely, if orbit $Q$ is in the closure of orbit $P$ (as indicated by Nurmiev's table) and there isn't already a directed path from $P$ to $Q$ we draw an arrow from $P$ to $Q$.
\begin{figure}\caption{A poset diagram for orbit closures in $\Rank_{C}^{2}$}\label{fig:graph}
\includegraphics[scale=0.75]{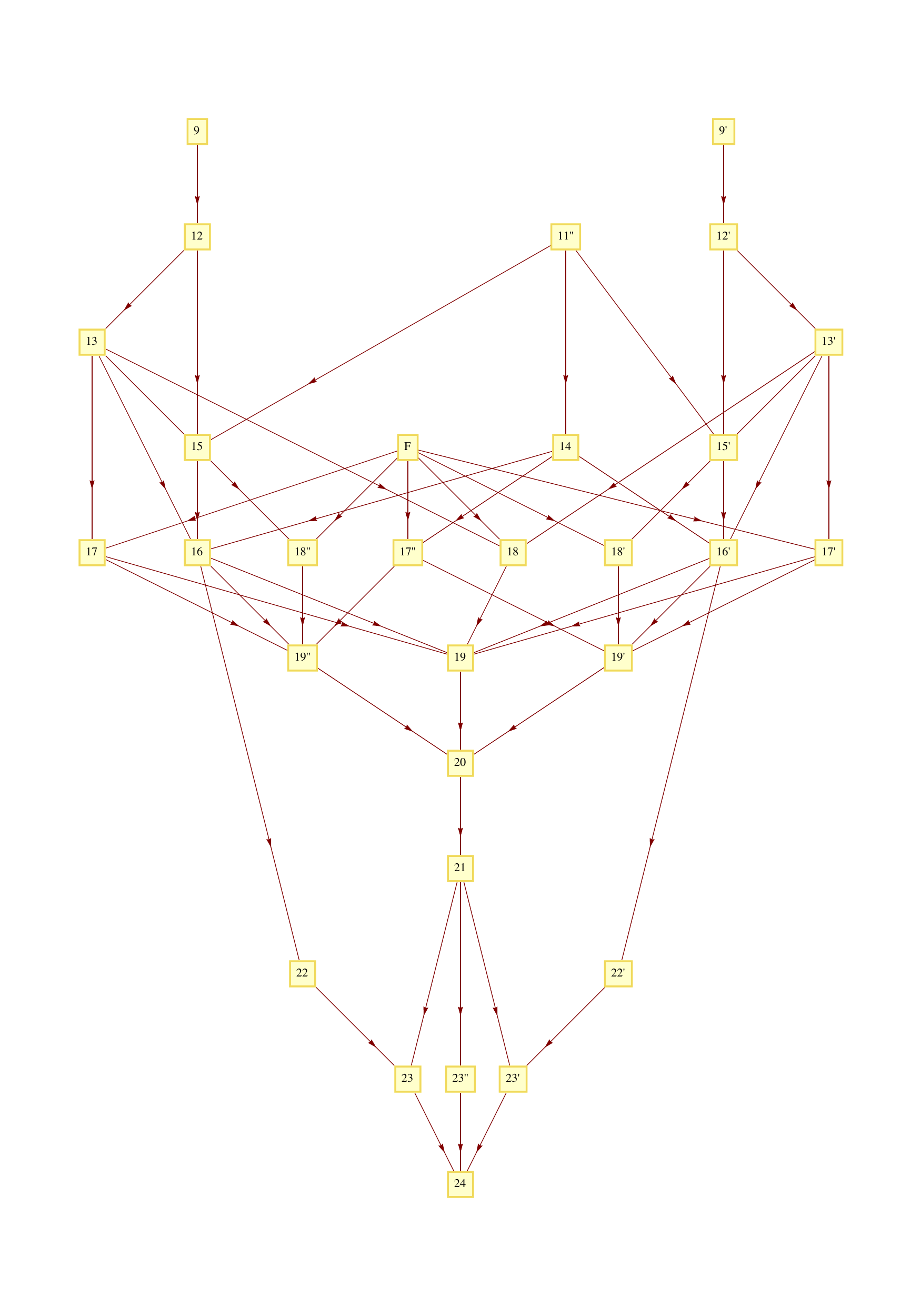}
\end{figure}

We then consider all orbits that are in the zero set of $M_{3}$ for all permutations of $A$,$B$ and $C$.  We write this zero set in shorthand as  $\V(\mathfrak{S}_{3}.M_{3})$.  Since $F$ is a zero of $\mathfrak{S}_{3}.M_{3}$, every orbit in its closure must also be in this zero set. A straightforward computation shows that these orbits in $\V(\mathfrak{S}_3.M_3)$ are numbers 17-21, 23, 24 (and all of their primed versions).  Of course this does not imply that these orbits are actually in the closure of $F$. However, it is enough to check that orbits 17, 18 (and their primed versions) are in the closure of $F$.

\begin{prop}\label{prop:F}
The zero set $\V(\mathfrak{S}_{3}.M_{3})$ (which equals $\Prank^{2,2,2}$ by definition) is irreducible and is the closure of the orbit $F$ above.

Moreover, the orbit associated to the normal form $F$ is not contained in any of the other orbit closures in Figure \ref{fig:graph}\end{prop}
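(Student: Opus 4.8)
The plan is to first prove the equality $\Prank^{2,2,2}=\overline{G\cdot F}$ by comparing the two sides orbit by orbit, and then deduce the ``moreover'' clause from the fact that $F$ is semisimple. Since $\Prank^{2,2,2}=\V(\mathfrak{S}_{3}.M_{3})$ is cut out by a sum of $G$-submodules of $S^{3}V^{*}$, it is closed and $G$-invariant, hence a union of $G$-orbits in $\PP V$. As $F$ is skew-symmetrizable in each of its three slicings we have $F\in\Prank^{2,2,2}$, and since $\Prank^{2,2,2}$ is closed this gives the easy inclusion $\overline{G\cdot F}\subseteq\Prank^{2,2,2}$; moreover $\overline{G\cdot F}$ is irreducible, being the closure of a single orbit. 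So everything reduces to the reverse inclusion, i.e. to showing that every $G$-orbit contained in $\Prank^{2,2,2}$ is also contained in $\overline{G\cdot F}$.

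First I would pin down which orbits occur: evaluating the $30$ cubic generators of $\mathfrak{S}_{3}.M_{3}$ on each of Nurmiev's parametrized normal forms (a finite check in any computer algebra system) shows that the orbits inside $\Prank^{2,2,2}$ are exactly $G\cdot F$ together with the nilpotent orbits numbered $17$--$21$, $23$, $24$ and all of their $\mathfrak{S}_{3}$-permutations (Nurmiev's primed representatives). Now $\overline{G\cdot F}$ is $\mathfrak{S}_{3}$-invariant, because the point $[F]\in\PP V$ is fixed by the $\mathfrak{S}_{3}$-action permuting the factors (a transposition sends $F$ to $-F$) and $\mathfrak{S}_{3}$ normalizes $G$. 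Hence if $G\cdot(17)\subseteq\overline{G\cdot F}$ then so are $G\cdot(17')$ and $G\cdot(17'')$, and likewise for $18$; and by Nurmiev's closure data, encoded in Figure~\ref{fig:graph}, the orbits $19,20,21,23,24$ and their permutations all lie in $\overline{G\cdot(17)}\cup\overline{G\cdot(18)}$. Therefore it suffices to exhibit, for orbits $17$ and $18$, a one-parameter degeneration of $F$ into that orbit.

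For this I would use a diagonal one-parameter subgroup
\[
\gamma(t)=\bigl(\mathrm{diag}(t^{p_{1}},t^{p_{2}},t^{p_{3}}),\ \mathrm{diag}(t^{q_{1}},t^{q_{2}},t^{q_{3}}),\ \mathrm{diag}(t^{r_{1}},t^{r_{2}},t^{r_{3}})\bigr)\in G,
\]
under which the monomial $a_{i}\otimes b_{j}\otimes c_{k}$ of $F=\sum_{\sigma\in\mathfrak{S}_{3}}\mathrm{sgn}(\sigma)\,a_{\sigma(1)}\otimes b_{\sigma(2)}\otimes c_{\sigma(3)}$ is rescaled by $t^{p_{i}+q_{j}+r_{k}}$. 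Choosing the weights so that the minimum total weight is attained exactly on the monomials spanning a $2\times 2\times 3$ subtensor that is generic inside $\Sub_{2,2,3}=\overline{G\cdot(17)}$, the limit $\lim_{t\to0}[\gamma(t)\cdot F]\in\PP V$ lies in $G\cdot(17)$; a second weight choice yields a limit in $G\cdot(18)$, which one identifies by comparison with Nurmiev's normal form after rescaling. Combining this with the previous paragraph, $\overline{G\cdot F}$ contains every orbit of $\Prank^{2,2,2}$, hence $\overline{G\cdot F}=\Prank^{2,2,2}$, which is therefore irreducible.

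For the ``moreover'' clause: every other vertex of Figure~\ref{fig:graph} is a nilpotent orbit, and the closure of a nilpotent orbit is contained in the null cone $\mathcal{N}\subset V$, which is closed and $G$-invariant. Since $F$ belongs to Nurmiev's fourth, semisimple, family, its orbit $G\cdot F$ is closed and nonzero, so $F\notin\mathcal{N}$; hence $F$ lies in no nilpotent orbit closure, and $\overline{G\cdot F}$ --- being closed, $G$-invariant, and containing $F$ --- is contained in none of the other orbit closures in Figure~\ref{fig:graph}. The main obstacle in this program is the explicit construction and verification of the two degenerations in the third paragraph (everything else is either a finite evaluation or a formal argument); as an independent consistency check one can instead note that $\dim\overline{G\cdot F}=\dim G-\dim\mathrm{Stab}_{G}([F])=16=\dim\Prank^{2,2,2}$ and invoke Computation~\ref{comp:Bertini} (a unique codimension-$10$ component of $\V(M_{3})$, of degree $1035$) together with Proposition~\ref{prop:m2} ($\deg\V(\mathfrak{S}_{3}.M_{3})=1035$) to force $\overline{G\cdot F}=\Prank^{2,2,2}$.
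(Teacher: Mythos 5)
Your overall architecture matches the paper's: classify the orbits lying in $\V(\mathfrak{S}_3.M_3)$ by evaluating on Nurmiev's normal forms, use the $\mathfrak{S}_3$-invariance of $\overline{G\cdot F}$ together with the closure poset to reduce everything to showing that orbits $17$ and $18$ lie in $\overline{G\cdot F}$, and then degenerate $F$ into those two orbits. Your treatment of the ``moreover'' clause is actually cleaner than the paper's: instead of checking flattening ranks to exclude $\Sub_{2,3,3},\Sub_{3,2,3}$ and exhibiting a module of $M_5$ not vanishing on $F$ to exclude $X$, you observe that $F$ is semisimple in the Vinberg/Nurmiev sense, so $G\cdot F$ is closed and disjoint from the null cone, while every other vertex of Figure~\ref{fig:graph} is a nilpotent orbit whose closure lies in the null cone. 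That argument is valid and worth keeping.

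The gap is in the one step that carries all the content: the two degenerations are asserted, not constructed, and the mechanism you propose cannot deliver what you claim. A limit of $F$ under a diagonal one-parameter subgroup (in the given coordinates) is a sub-sum of the six monomials $\mathrm{sgn}(\sigma)\,a_{\sigma(1)}\otimes b_{\sigma(2)}\otimes c_{\sigma(3)}$, hence is supported on permutation positions of the $3\times3\times3$ cube. Any $2\times2\times3$ coordinate block contains exactly three such positions (restricting $\sigma(1)$ and $\sigma(2)$ to two values each leaves three permutations), and the resulting three-term tensor, written as a $2\times2$ matrix of linear forms in $c$, is of the shape $\left(\begin{smallmatrix}-c_2&0\\ c_1&-c_3\end{smallmatrix}\right)$, whose determinant $c_2c_3$ is a singular conic; a generic point of $\Sub_{2,2,3}$ has a smooth conic there. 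So no choice of weights makes $\lim_{t\to0}[\gamma(t)\cdot F]$ a generic $2\times2\times3$ subtensor, and your stated target for orbit $17$ is unreachable by this method. (What a diagonal torus limit \emph{can} reach is the four-monomial sub-sum with $\sigma(1)\in\{1,2\}$, i.e.\ the rank-two skew-symmetrizable matrix $\left(\begin{smallmatrix}0&-a_1&-a_2\\ a_1&0&0\\ a_2&0&0\end{smallmatrix}\right)$ --- this is exactly the paper's $z_n\to0$ degeneration onto orbit $17$, so the method does handle one of the two targets.) For orbit $18$, whose representative is the generic $2\times3$ pencil $\left(\begin{smallmatrix}a_1&a_2&0\\ 0&a_1&a_2\\ 0&0&0\end{smallmatrix}\right)$, the argument above shows a diagonal torus applied to $F$ itself never suffices; one must first move $F$ by a nontrivial group element (the paper's own construction amounts to this and is the delicate part of its proof), and you give no such construction. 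Your fallback via $\dim\overline{G\cdot F}=16$ and the degree count does not close the gap either: it imports irreducibility of the codimension-$10$ component from the numerical Bertini output, which the paper explicitly treats as conjectural data to be proved, and the degree of $\V(\mathfrak{S}_3.M_3)$ computed by Macaulay2 only constrains its top-dimensional part.
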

\begin{proof}
This proof is entirely computational, but since we did not find it in the literature, we provide the computations here.

Orbit closures consist of one orbit of the top dimension along with other orbits of lower dimension.
So we conclude by counting dimensions that none of the orbits $9$, $9'$, $11''$, $12$, $12'$, $13$, $13'$, $14$, $15$ or $15'$  are in the closure of $F$.  Later we will show that $17$, $18$ and all their primed versions are in the closure of $F$.

We claim that none of the orbits of higher dimension ($9$, $9'$, $11''$, $12$, $12'$, $13$, $13'$) contain $F$ in their closure.
Consider the normal form of $F$
\[
\begin{pmatrix}
0 & x & -y \\ -x& 0 & z \\ y&-z &0
\end{pmatrix}
,\]
and flatten it to the matrix
\[
\left(
\begin{matrix}
0 & x & 0 \\ -x& 0 & 0 \\ 0&0 &0
\end{matrix}
\left\vert\vphantom{\begin{matrix}\\ \\ \end{matrix}}\right.
\begin{matrix}
0 & 0 & -y \\ 0& 0 & 0 \\ y&0 &0
\end{matrix}
\left\vert\vphantom{\begin{matrix}\\ \\ \end{matrix}}\right.
\begin{matrix}
0 & 0 & 0 \\ 0& 0 & z \\ 0&-z &0
\end{matrix}
\right)
,\]
which, for general choices of $x,y,z$ has full rank.  The other slices have similar format, and this shows that $F$ is not contained in either of the subspace varieties $\Sub_{2,3,3}$ or $\Sub_{3,2,3}$ (the closures of orbits $9$ and $9''$). This also implies that $F$ is not in the closure of any orbit contained in the closure of $9$ or $9''$.

To show that $F$ is not contained in $X$, we could demonstrate a polynomial in $I(X)$ that does not vanish on $F$.  We already noted that the skew-symmetric matrices in $F$ has rank 2 or less and thus vanishes on all polynomials in $M_{3}$. The ideal $I(X)$ has no minimal generators in degree 4, so we must start to consider polynomials in degree 5 or higher. Here we notice by direct computation that neither of the modules in $M_{5}$ vanish on $F$, separating $F$ from $X$.

Another way to conclude $\Rank_{C}^{2}\not\subset \V(M_{5})$, without computation, is to consider the degree 5 piece of the ideal generated by $M_{3} = \bw{3}\bw{3}S_{3}$. The Pieri formula gives that every module of $I(M_{3})$ in degree 5 must have a partition in the first position whose first part is at least 3. On the other hand, the module $S_{221}S_{221}S_{221}$ in $M_{5}$ fails this property.  So it cannot be in the ideal $I(M_{3})$ of $\Rank_{C}^{2}$, and thus $\Rank_{C}^{2}\not\subset X$.

Any orbit in $\V(M_{3})$ is either in the closure of $F$ or in the closure of $9$, $9'$ or $11''$.  Moreover, because $F$ is not in the closure of $9$, $9'$ or $11''$,  its closure must be an irreducible component of $\V(M_{3})$.

Since $\mathfrak{S}_{3}.M_{3}$ manifestly has $\mathfrak{S}_{3}$ symmetry, so does its zero-set. Thus to prove that $\V(\mathfrak{S}_{3}.M_{3})$ is irreducible, we need to show that orbits 17, 18 and their primed versions are contained in the closure of $F$.  This will imply irreducibility of $\V(\mathfrak{S}_{3}.M_{3})$ because every orbit contained there is in the closure of a single orbit.

Since $F$ is symmetric with respect to permutation by $\mathfrak{S}_{3}$, it suffices to prove that orbits 17 and 18 are contained in the closure of $F$. To do this we exhibit a sequence of points in the orbit of $F$ that converge to each orbit.  We prefer to work with the normal forms both as tensors and as matrices of linear forms. The group operations allowed are row and column operations as well as general linear changes of coordinates on each linear form appearing. 

Nurmiev's orbit 17 has normal form given by the matrix
\[
\begin{pmatrix}
0 & a_{1} &a_{2} \\
a_{1}&0&0\\
a_{2}&0&0\\
\end{pmatrix}
,\]
or the tensor
\[ 
a_{1}\otimes (b_{1}\otimes c_{2} + b_{2}\otimes c_{1}) + 
a_{2}\otimes (b_{1}\otimes c_{3} + b_{3}\otimes c_{1})
.\]
Replacing $b_{1}$ with $-b_{1}$ we obtain the tensor
\[ 
a_{1}\otimes (-b_{1}\otimes c_{2} + b_{2}\otimes c_{1}) + 
a_{2}\otimes (-b_{1}\otimes c_{3} + b_{3}\otimes c_{1})
\]\[=
-a_{1}\otimes b_{1}\otimes c_{2} +a_{1}\otimes b_{2}\otimes c_{1}) + 
-a_{2}\otimes b_{1}\otimes c_{3} +a_{2}\otimes b_{3}\otimes c_{1})
,\]
which corresponds to the (skew symmetric) matrix
\[
\begin{pmatrix}
0 & -a_{1} &-a_{2} \\
a_{1}&0&0\\
a_{2}&0&0\\
\end{pmatrix}
.\]
This matrix is a limit of the following matrices in the orbit of $F$
\[
\begin{pmatrix}
0 & -a_{1} & -a_{2} \\ a_{1}& 0 & z_{n} \\ a_{2}&-z_{n} &0
\end{pmatrix}
,\]
where $z_{n}\to 0$. 

For the orbit 18, we consider the following representative as a tensor
\[
a_{1}\otimes (b_{1}\otimes c_{1} + b_{2}\otimes c_{2}) + 
a_{2}\otimes (b_{1}\otimes c_{2} + b_{2}\otimes c_{3})  
\]
which corresponds to the matrix
\[
\begin{pmatrix}
a_{1} &a_{2} &0 \\
0&a_{1} &a_{2} \\
0&0&0
\end{pmatrix}
.\]
By cycling rows 1, 2 and 3 we at least have a matrix with $0$ diagonal,
\[
\begin{pmatrix}
0&a_{1} &a_{2} \\
0&0&0\\
a_{1} &a_{2} &0 
\end{pmatrix}
.\]
Now we work with the following matrix normal form for $F$:
\[
\begin{pmatrix}
0 & x & -y \\ -x& 0 & z \\ y&-z &0
,\end{pmatrix}\]
which corresponds to the tensor 
\[
F=z\otimes b_2\otimes c_3+y\otimes b_3\otimes c_1+x\otimes b_1\otimes c_2
-z\otimes b_3\otimes c_2-y\otimes b_1\otimes c_3-x\otimes b_2\otimes c_1.\]
First we set $x = a_{1}$ and $-y=a_{2}$. Considering a limit of such tensors gives a tensor in the closure with $b_{2}=0$ 
corresponding to the matrix
\[
\begin{pmatrix}
0 & a_{1} & a_{2} \\ 0& 0 & 0 \\ -a_{2}&-z &0
\end{pmatrix}.\]
Multiply the 3rd row by $\frac{-a_{1}}{a_{2}}$ to get another matrix in the same orbit
\[
\begin{pmatrix}
0 & a_{1} & a_{2} \\ 0& 0 & 0 \\ a_{1}&-z\frac{-a_{1}}{a_{2}} &0
\end{pmatrix}.\]
Finally we set $z = \frac{a_{2}^{2}}{a_{1}}$ to yield a matrix in the orbit 18.
\end{proof}

The above discussion provides the following effective test for a given tensor $T$ to be a trifocal tensor. Namely the orbit of trifocal tensors is precisely the $G$-invariant set of tensors with $\Prank(T) = (3,3,2)$ or some permutation thereof, and $\Frank(T) = (3,3,3)$. These two conditions contain both vanishing and non-vanishing conditions.  We phrase this as an algorithm to determine whether a given tensor is a trifocal tensor.

\begin{algor0}
\hfill\par
\textbf{Input:} A tensor $T\in \CC^{3}\otimes \CC^{3}\otimes \CC^{3}$.
\begin{itemize}
\item Replace $T$ by a change of coordinates (either arbitrary or random) from $GL(A)\times GL(B) \times GL(C)$ applied to $T$.
\item Compute the projections $T(A),T(B),T(C)$. 
Is $\Prank(T)=(3,3,2)$ (or some permutation) and no less? 
\subitem NO: stop, $T$ is not a trifocal tensor. 
\subitem YES: continue 
\item Compute all 3 flattenings. Is $\Frank(T)=(3,3,3)$ and no less?
\subitem NO: stop, $T$ is not a trifocal tensor.
\subitem YES: $T$ is a trifocal tensor.
\end{itemize}
\par
\end{algor0}
If one uses arbitrary changes of coordinates (with parameters) the conclusions of Algorithm 1 hold without modification.  However, it may be difficult to perform the tests. If one uses random changes of coordinates, Algorithm 1 will go quickly, and the negative conclusions are sure, but the positive conclusions will hold only with high probability.

This test is effective because it involves computing the ranks of three $3\times 3$ matrices and the ranks of three $3\times 9$ matrices.  This test is similar in spirit to the results in \cite[Section 4]{Alzati-Tortora}.

Proposition \ref{prop:F} yields the following geometric statement.
\begin{prop}\label{prop:decomp} Let $X$ denote the trifocal variety (the closure of orbit $11''$).  Then the irreducible decomposition of $\V(M_{3})=\Prank^{3,3,2}$ is
\[
\V(M_{3})= \Sub_{2,3,3} \cup \Sub_{3,2,3} \cup X \cup \Prank^{2,2,2}.
\]
\end{prop}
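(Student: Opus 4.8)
The plan is to collect the set-theoretic containments that have already been established and check that they exhaust $\V(M_3)$. First I would recall that $\V(M_3)=\Rank^2_C=\Prank^{3,3,2}$ by definition, since $M_3$ is exactly the module of coefficients of $\det(x_1Z_1+x_2Z_2+x_3Z_3)$, whose vanishing is the rank-$\le 2$ condition on $T(C)$. From Section~\ref{sec:other varieties} and the normal forms in Section~\ref{sec:trifocal} we already know the four containments $\Sub_{2,3,3}\subseteq\V(M_3)$, $\Sub_{3,2,3}\subseteq\V(M_3)$, $X\subseteq\V(M_3)$ (because a normal form for $T(C)$ has rank $\le 2$), and $\Prank^{2,2,2}\subseteq\V(M_3)$ (since a skew-symmetric $3\times 3$ matrix has even rank, hence rank $\le 2$). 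So the only thing to prove is the reverse inclusion $\V(M_3)\subseteq \Sub_{2,3,3}\cup\Sub_{3,2,3}\cup X\cup\Prank^{2,2,2}$.

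Next I would reduce this to a statement about orbits using Nurmiev's classification. Because $\V(M_3)$ is a $G$-invariant closed set and $G$ has only finitely many orbits up to the semisimple families, $\V(M_3)$ is a union of orbit closures, and it suffices to identify which $G$-orbits meet $\V(M_3)$. The computation described just before Proposition~\ref{prop:F} lists exactly these: the nilpotent orbits $9,9',11'',12,12',13,13',14,15,15',16,16',17,17',17'',18,\dots,24,25$, together with the single semisimple family represented by $F$. Then I would invoke the poset of orbit closures in Figure~\ref{fig:graph} (i.e.\ Nurmiev's closure data in \cite{Nurmiev2}, augmented by the arrows out of $F$ proved in Proposition~\ref{prop:F}): every orbit appearing in $\V(M_3)$ lies in the closure of one of the four ``maximal'' orbits $9$, $9'$, $11''$, or $F$. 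Passing to Zariski closures, $\overline{G.(\text{orbit }9)}=\Sub_{2,3,3}$, $\overline{G.(\text{orbit }9')}=\Sub_{3,2,3}$, $\overline{G.(\text{orbit }11'')}=X$, and $\overline{G.F}=\Prank^{2,2,2}$ by Proposition~\ref{prop:F}, which gives the claimed decomposition as a set. Finally, I would note these four pieces are pairwise incomparable: they have codimensions $7,7,8,10$ respectively, and Proposition~\ref{prop:F} already records that $F\notin X$ and $F$ is not in either subspace variety, while $X$ is not contained in either $\Sub$ (wrong dimension, and $X$ does not satisfy the extra $3$-minor relations). Hence none of the four is contained in the union of the others, so this is genuinely the irreducible decomposition.

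The main obstacle is establishing that the list of orbits meeting $\V(M_3)$ is complete and that the closure poset is as drawn — but that work has been done already: the completeness is the explicit ``evaluate $M_3$ on each Nurmiev normal form'' computation stated in Section~\ref{sec:Nurmiev}, and the closure relations (including the arrows from $F$ down to orbits $17,18$ and their primes, and the non-containments of $F$ in the higher orbits) are precisely the content of Proposition~\ref{prop:F}. So modulo citing those, the proof is a short bookkeeping argument. I would write it as: ``By definition $\V(M_3)=\Prank^{3,3,2}$, and by the orbit computation of Section~\ref{sec:Nurmiev} together with the closure poset of Figure~\ref{fig:graph} and Proposition~\ref{prop:F}, every $G$-orbit in $\V(M_3)$ lies in the closure of one of $9,9',11'',F$; taking closures and using Proposition~\ref{prop:F} identifies these four closures with $\Sub_{2,3,3},\Sub_{3,2,3},X,\Prank^{2,2,2}$, and a dimension count shows none is redundant.''
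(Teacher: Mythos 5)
Your proposal is correct and follows essentially the same route as the paper: verify the four containments via normal forms whose $C$-projection has rank $\le 2$, then use the orbit list and the closure poset of Figure~\ref{fig:graph} (justified by \cite{Nurmiev2} and Proposition~\ref{prop:F}) to see that the four sources $9$, $9'$, $11''$, $F$ give the only irreducible components. Your added check that the four closures are pairwise incomparable is a sensible explicit supplement to the paper's terser "four sources" remark.
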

\begin{proof}
To see that $\V(M_{3})$ contains the four listed components, just construct a normal form for each and notice that the associated matrix in linear forms  has rank $<3$.  To see that these are the only components, look at the orbit closure diagram in Figure \ref{fig:graph}, which displays all orbits in $\V(M_{3})$ and is justified by \cite{Nurmiev2} and Proposition \ref{prop:F}. Notice that there are 4 sources in the directed graph representing the poset and these correspond to the only irreducible components in the decomposition. 
\end{proof}

Nurmiev's table includes the dimension of the stabilizer of each orbit, which tells the codimension of each of the components: $\codim(\Sub_{2,3,3}) = 7$, $\codim(X) = 8$, $\codim(\Prank^{2,2,2})  = 10$.  Nurmiev's computation is confirmed by the computation done in Bertini; however, Bertini tells us a bit more, namely the degree of each component.
\begin{prop}\label{prop:irred}
The zero set $\V(M_{3}+M_{5}+M_{6})$ is irreducible and agrees with $X$ set-theoretically.
\end{prop}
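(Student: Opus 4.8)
The plan is to show two set-theoretic inclusions: $X \subseteq \V(M_3+M_5+M_6)$ and $\V(M_3+M_5+M_6) \subseteq X$. The first is immediate: we have already verified (by the representation-theoretic membership test of Section~\ref{sec:rep}, via the Landsberg--Manivel algorithm) that $M_3$, $M_5$, and $M_6$ all lie in $I(X)$, so $X \subseteq \V(I(X)) \subseteq \V(M_3+M_5+M_6)$. The content is therefore entirely in the reverse inclusion, and the strategy is to use Proposition~\ref{prop:decomp} to pin down exactly which of the four irreducible components of $\V(M_3) = \Prank^{3,3,2}$ survive after we also impose vanishing of $M_5$ and $M_6$.

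First I would recall from Proposition~\ref{prop:decomp} that $\V(M_3) = \Sub_{2,3,3} \cup \Sub_{3,2,3} \cup X \cup \Prank^{2,2,2}$. Since $\V(M_3+M_5+M_6) \subseteq \V(M_3)$, every irreducible component of $\V(M_3+M_5+M_6)$ lies inside one of these four irreducible varieties; and since $M_5, M_6 \subset I(X)$ but $X$ is irreducible, $X$ itself is one component of $\V(M_3+M_5+M_6)$. It remains to rule out the other three as components, i.e.\ to show that each of $\Sub_{2,3,3}$, $\Sub_{3,2,3}$, and $\Prank^{2,2,2}$ meets $\V(M_5)$ (or $\V(M_6)$) only in a subvariety of $X$. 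For $\Prank^{2,2,2}$ this is already done inside the proof of Proposition~\ref{prop:F}: the normal form $F$ does not satisfy the equations of $M_5$ (checked by direct computation, or by the Pieri-formula argument showing $S_{221}S_{221}S_{221}$ cannot lie in the ideal generated by $\bw3\bw3 S^3$ in degree $5$), so $\Prank^{2,2,2} = \overline{G\cdot F} \not\subseteq \V(M_5)$; as $\Prank^{2,2,2}$ is irreducible, $\Prank^{2,2,2}\cap\V(M_5)$ is a proper closed subvariety, and one checks (via the orbit poset in Figure~\ref{fig:graph}, or directly) that it is contained in the lower orbit closures common to $X$ and $\Prank^{2,2,2}$, hence in $X$. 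The analogous step for the two subspace varieties is to exhibit a polynomial in $M_5$ (or $M_6$) that does not vanish identically on $\Sub_{2,3,3}$, respectively $\Sub_{3,2,3}$: since $I(\Sub_{2,3,3}) = \langle \bw3\bw3 S^3 \oplus \bw3 S_{21}S_{21} \oplus \bw3 S^3 \bw3\rangle$ is generated by modules whose $A$-partition has first part $\geq 3$, while $M_5 \ni S_{221}S_{221}S_{311}$ has $A$-partition $221$ with first part $2$, the module $M_5$ cannot lie in $I(\Sub_{2,3,3})$, so $\Sub_{2,3,3}\not\subseteq\V(M_5)$; irreducibility again forces $\Sub_{2,3,3}\cap\V(M_5)$ to be a proper subvariety, which one identifies with a subvariety of $X$ using the orbit diagram (the only orbits in both $\Sub_{2,3,3}$ and $\V(M_5)$ lie in $\overline{G\cdot(11'')} = X$). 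The same argument with the roles of $A$ and $B$ swapped handles $\Sub_{3,2,3}$.

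Having shown that $X$ is the unique irreducible component of $\V(M_3+M_5+M_6)$ not contained in $X$, and that all remaining components are contained in $X$, we conclude $\V(M_3+M_5+M_6) = X$ as sets. The main obstacle is the bookkeeping in the last step: one must be careful that ``$\Sub_{2,3,3}\cap\V(M_5)$ is a proper closed subvariety'' genuinely implies it is contained in $X$ and not merely in some other orbit closure off $X$. This is where the explicit orbit poset of Figure~\ref{fig:graph} and Nurmiev's closure data are essential — one checks orbit by orbit which orbits of $\V(M_3)$ survive imposing $M_5$, and reads off from the poset that each such orbit lies below $11''$. (An alternative, purely dimension-theoretic route: each of $\Sub_{2,3,3}$, $\Sub_{3,2,3}$, $\Prank^{2,2,2}$ has codimension $\neq 8$ while the Gr\"obner computation of Proposition~\ref{prop:m2} gives $\codim \V(M_3+M_5+M_6) = 8$ and $\deg = 297 = \deg X$; combined with $X \subseteq \V(M_3+M_5+M_6)$ and equidimensionality one gets equality directly. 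I would present the orbit-closure argument as the primary proof since it is self-contained geometrically, and remark on the degree computation as confirmation.)
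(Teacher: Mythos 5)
Your overall strategy (decompose $\V(M_3)$ into its four components via Proposition~\ref{prop:decomp} and kill the three non-$X$ components with the higher-degree generators) matches the paper's, but the step you defer to ``one checks via the orbit diagram'' is exactly where the content lies, and the way you propose to carry it out fails. You argue that $\Sub_{2,3,3}\not\subseteq\V(M_5)$, hence $\Sub_{2,3,3}\cap\V(M_5)$ is a proper $G$-invariant closed subset, and then assert it lies in $X$. But a proper $G$-invariant subvariety of $\Sub_{2,3,3}$ is a union of smaller orbit closures, and some of these lie outside $X$. Concretely, both modules of $M_5$ have the partition $221$ (three parts) in the first two factors, so by the description of $I(\Sub_{p,q,r})_d$ in Section~\ref{sec:other varieties} they belong to $I(\Sub_{2,2,3})$ (and to the ideals of its permuted versions). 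Hence $M_5$ vanishes identically on $\Sub_{2,2,3}$, which sits inside $\Sub_{2,3,3}\cap\Sub_{3,2,3}$ but is \emph{not} contained in $X$ (it is the closure of orbit $17$ up to permutation, which the poset in Figure~\ref{fig:graph} places outside $\overline{G\cdot 11''}$). So $\Sub_{2,3,3}\cap\V(M_5)\not\subseteq X$, and $M_5$ cannot do the job you assign to it. The same problem afflicts your treatment of $\Prank^{2,2,2}$: knowing $F\notin\V(M_5)$ only tells you the intersection is proper, not that it avoids the orbits $17,17',18',18''$ sitting below $F$ but outside $X$. This is why the paper's proof works at the level of orbits rather than components: it identifies the minimal orbits of $\V(M_3)$ lying outside $X$ (namely $17,17',18',18''$) and verifies by direct computation that specific \emph{degree-six} modules ($S_{33}S_{222}S_{411}$, $S_{222}S_{33}S_{411}$, $S_{33}S_{33}S_{222}$, $S_{222}S_{33}S_{33}$, all in $M_6$) fail to vanish on them; non-vanishing then propagates to every orbit whose closure contains one of these, i.e., to every orbit of $\V(M_3)$ outside $X$.

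Your parenthetical ``dimension-theoretic route'' also has a gap: the Macaulay2 computation gives the codimension of $\V(J)$ (the minimum over components) and the degree of its top-dimensional part. This identifies the codimension-$8$ part with $X$, but it does not establish equidimensionality, so it cannot rule out lower-dimensional components of $\V(J)$ not contained in $X$ --- which is precisely the possibility (e.g.\ a stray $\Sub_{2,2,3}$ of codimension $11$) that the orbit computation is needed to exclude.
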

\begin{proof}
We need to show that when we intersect $\V(M_{3})$ with $\V(M_{5}+M_{6})$ that all of the orbits that remain are actually in the trifocal variety.  It suffices to show that orbits 17, 17', 18', 18'' are not in $\V(M_{5}+M_{6})$. This is because by considering the orbit closure poset diagram  in Figure \ref{fig:graph}, these orbits are contained in all other orbits in $\V(M_{3})$ that are not in the trifocal variety $X$, so if they are not in $\V(M_{5}+M_{6})$, then no other $G$-orbit in $\V(M_{3})$ outside of $X$ is in $\V(M_{3}+M_{5}+M_{6})$.

By direct computation, we find that the module $S_{33}S_{222}S_{411}$ does not vanish on orbit 17, 
the module $S_{222}S_{33}S_{411}$ does not vanish on orbit 17',
the module $S_{33}S_{33}S_{222}$ does not vanish on 18',
and $S_{222}S_{33}S_{33}$ does not vanish on 18''.  On the other hand, each of these modules are in $M_{6}$.
\end{proof}

\section{The ideal $J$ is prime}\label{sec:steven}
Let $J = \langle M_{3}+M_{5} + M_{6}\rangle$, where $M_{d}$ are the minimal generators of $I(X)$ in degree $d$.
The trifocal variety $X$ is irreducible because it is a parameterized variety. The fact that the zero set $V(J)$ is irreducible and equals $X$ is the content of Proposition \ref{prop:irred}. So $I(X)$ and $J$ agree up to radical. It remains to check that there are no embedded components.

The classification of $G$-orbits in $V$ also yields a classification of minimal $G$-invariant prime ideals. To every orbit is associated the prime ideal of its orbit closure. 
\begin{remark}
Here we also use the fact that if $G$ is a connected group, and $J$ is a $G$-stable ideal, then the minimal primes in any primary decomposition of $J$ are $G$-stable. 
This essentially follows from the fact that if $J =\cap_{i}Q_{i}$ is a primary decomposition with primary ideals $Q_{i}$ associated to primes $P_{i}$, then $gJ = J = \cap_{i}gQ_{i}$ for any $g \in G$ and this action must permute the $P_{i}$ by the uniqueness of minimal primes. But since $G$ is connected, this permutation must be trivial.
\end{remark}

The poset in Figure \ref{fig:graph} shows that the minimal prime ideals that contain $I(X)$ are those corresponding to orbits 14, 15, and $15'$. Let $P_{14}$, $P_{15}$ and $P_{15'}$ denote the corresponding prime ideals.
Then we must have $J\subset P_{14}\cap P_{15} \cap P_{15'}$.  On the other hand, we know that $\sqrt J$ is prime and equals $I(X)$. 
So a primary decomposition of $J$ is of the form
 $J= I(X)\cap Q_{14} \cap Q_{15} \cap Q_{15}$, for some primary ideals $Q_{i}$ associated to the primes $P_{i}$.  We will show that the multiplicity of each $Q_{i}$ with respect to $P_{i}$ is zero. 

If we show this, we don't have to consider possible embedded components coming from the other orbits in the closure of $X$ because these ideals contain $P_{14}$, $ P_{15}$ and $P_{15'}$.  Moreover, since $X$ and $J$ have an $\mathfrak{S}_{2}$ symmetry, if we show that the $P_{15}$ does not occur in the primary decomposition, then neither does $P_{15'}$.

We will use a basic fact from commutative algebra. We found \cite[Theorem 12.1]{BrunsVetter} a useful formulation for understanding this type of test.

\begin{prop}\cite[Proposition~4.7]{AtiyahMacDonald}
Let $\mathfrak{a}$ be a decomposable ideal in a ring $A$, let $\mathfrak{a} =\cap_{i-1}^{n}\mathfrak{q}_{i}$ be a minimal primary decomposition and let $\mathfrak{p}_{i}$ be the prime ideal associated to the primary ideal $\mathfrak{q_{i}}$.  Then
\[
\cup_{i=1}^{n}\mathfrak{p}_{i} = \{x\in A\mid (\mathfrak{a}:x) \neq \mathfrak{a}  \}.
\]
In particular, if the zero ideal is decomposable, the set $D$ of zero-divisors of $A$ is the union of the prime ideals belonging to $0$.
\end{prop}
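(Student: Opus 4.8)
The plan is to run the classical commutative-algebra argument behind \cite[Proposition~4.7]{AtiyahMacDonald}, in three steps. \textbf{Step 1 (reduction to the zero ideal).} First I would pass to the quotient ring $A/\mathfrak{a}$: the images $\mathfrak{q}_i/\mathfrak{a}$ form a minimal primary decomposition of the zero ideal of $A/\mathfrak{a}$ with associated primes $\mathfrak{p}_i/\mathfrak{a}$, and for $x\in A$ one has $(\mathfrak{a}:x)\neq\mathfrak{a}$ in $A$ precisely when the image $\bar x$ is a zero-divisor of $A/\mathfrak{a}$ (including the degenerate case $\bar x=0$). Under the order-preserving bijection between primes of $A$ containing $\mathfrak{a}$ and primes of $A/\mathfrak{a}$, the asserted identity becomes exactly the ``in particular'' statement applied to $A/\mathfrak{a}$. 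Hence it suffices to treat $\mathfrak{a}=0$, where the right-hand side is literally the set $D$ of zero-divisors, and the claim is $\bigcup_{i=1}^n\mathfrak{p}_i=D$.

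\textbf{Step 2 (the annihilator formula and the easy inclusion).} The computational core is the identity $\sqrt{(0:x)}=\bigcap_{x\notin\mathfrak{q}_i}\mathfrak{p}_i$. I would derive it from $(0:x)=\bigcap_i(\mathfrak{q}_i:x)$ together with the two cases: $(\mathfrak{q}_i:x)=(1)$ when $x\in\mathfrak{q}_i$, and $\sqrt{(\mathfrak{q}_i:x)}=\mathfrak{p}_i$ when $x\notin\mathfrak{q}_i$ (the latter being an immediate consequence of $\mathfrak{q}_i$ being $\mathfrak{p}_i$-primary), and then commuting the radical through a finite intersection. Granting this, the inclusion $D\subseteq\bigcup_i\mathfrak{p}_i$ is short: if $xy=0$ with $y\neq 0$ then $x\in(0:y)\subseteq\sqrt{(0:y)}=\bigcap_{y\notin\mathfrak{q}_i}\mathfrak{p}_i\subseteq\bigcup_i\mathfrak{p}_i$, where the index set is nonempty because $y\notin 0=\bigcap_j\mathfrak{q}_j$.

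\textbf{Step 3 (the reverse inclusion --- the main obstacle).} To show $\bigcup_i\mathfrak{p}_i\subseteq D$, fix $i$. Minimality of the primary decomposition means the $i$-th component is not redundant, so $\bigcap_{j\neq i}\mathfrak{q}_j\neq 0$; choose $x\neq 0$ in it. Then $x\notin\mathfrak{q}_i$ (else $x\in\bigcap_j\mathfrak{q}_j=0$), so $\sqrt{(0:x)}=\mathfrak{p}_i$ by Step 2. Given $y\in\mathfrak{p}_i$, pick the least $n\ge 1$ with $y^nx=0$; then $w:=y^{n-1}x\neq 0$ (reading $y^0x=x\neq 0$ when $n=1$) while $yw=0$, so $y$ is a zero-divisor. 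Hence $\mathfrak{p}_i\subseteq D$, and taking the union over $i$ finishes the proof. The step I expect to require the most care is this last one: it is exactly here that minimality of the decomposition is used (to produce an $x$ with $\sqrt{(0:x)}=\mathfrak{p}_i$), and the ``least $n$'' device is what promotes the merely nilpotent relation $y\in\sqrt{(0:x)}$ to an honest zero-divisor relation; everything preceding it is bookkeeping with radicals and quotients.
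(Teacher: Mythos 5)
Your argument is correct and is exactly the standard proof of \cite[Proposition~4.7]{AtiyahMacDonald}, which the paper simply cites rather than reproves: the reduction to $\mathfrak{a}=0$, the formula $\sqrt{(0:x)}=\bigcap_{x\notin\mathfrak{q}_i}\mathfrak{p}_i$, and the irredundancy-plus-least-exponent device for the reverse inclusion are all as in the cited source. No gaps.
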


We also have the following well-known fact (see for example \cite{Eisenbud}).
\begin{prop} Let $R= k[x_{0},\dots x_{n}]$, let $J$ be an ideal in $R$ and  suppose $f\in R$ has degree $d$ and is not a zero-divisor in $R/J$. 
Then we have the following identity of Hilbert series:
$$(1-t^{d})H_{R/J}(t) = H_{R/(J+f)}(t).$$
\end{prop}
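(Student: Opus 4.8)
The plan is to realize $H_{R/(J+f)}$ via a short exact sequence of graded $R$-modules built from multiplication by $f$. Assuming, as is implicit in the statement and as holds in our application where $J=\langle M_{3}+M_{5}+M_{6}\rangle$ and $f$ is a homogeneous element, that $J$ is homogeneous and $f$ is homogeneous of degree $d$, I would consider the degree-preserving map of graded modules
\[
\mu_{f}\colon (R/J)(-d)\longrightarrow R/J
\]
given by multiplication by $f$; the twist by $-d$ is precisely what makes $\mu_{f}$ a homomorphism of graded modules of degree zero.

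First I would observe that $\mu_{f}$ is injective exactly because $f$ is a nonzerodivisor on $R/J$: an element $\bar g\in R/J$ with $f\bar g=0$ in $R/J$ forces $\bar g=0$, by the very definition of nonzerodivisor. The cokernel of $\mu_{f}$ is $R/(J+(f))$ by definition of the quotient ring. Hence there is a short exact sequence of finitely generated graded modules
\[
0\longrightarrow (R/J)(-d)\xrightarrow{\ \mu_{f}\ } R/J\longrightarrow R/(J+f)\longrightarrow 0.
\]

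Next I would apply additivity of the Hilbert series on short exact sequences of graded modules: in each degree $e$ one has $\dim_{k}[R/(J+f)]_{e}=\dim_{k}[R/J]_{e}-\dim_{k}[(R/J)(-d)]_{e}$, and since $[(R/J)(-d)]_{e}=[R/J]_{e-d}$, multiplying by $t^{e}$ and summing gives
\[
H_{R/(J+f)}(t)=H_{R/J}(t)-t^{d}H_{R/J}(t)=(1-t^{d})H_{R/J}(t),
\]
which is the asserted identity.

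There is essentially no hard step here; the proof is standard and the only points requiring care are bookkeeping ones: placing the degree shift on the correct factor so that $\mu_{f}$ preserves degree, and invoking the nonzerodivisor hypothesis at exactly the one place it is needed, namely the injectivity of $\mu_{f}$. If one did not wish to assume homogeneity, the same argument would run degree by degree along the filtration of $R$ by degree, but in the intended application both $J$ and the relevant elements $f$ are homogeneous, so the graded argument above applies verbatim.
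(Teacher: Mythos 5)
Your proof is correct and is essentially identical to the paper's: both build the short exact sequence $0 \to (R/J)(-d) \xrightarrow{f} R/J \to R/(J+f) \to 0$, using the nonzerodivisor hypothesis for injectivity, and conclude by additivity of Hilbert series together with $H_{(R/J)(-d)}(t) = t^{d}H_{R/J}(t)$.
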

\begin{proof}
This is completely standard, but we recall the proof here for the reader's convenience and because it elucidates the ideas we will use later.

If $f$ is not a zero divisor, the following sequence is exact
\begin{equation}\label{eq:seq}
0 \rig{} (R/J)(-d) \rig{f} R/J \rig{} R/(J+f) \rig{} 0
.\end{equation}
Since $H_{(R/J)(-d)}(t) = t^{d}H_{(R/J)}(t)$, the result follows from the additivity of Hilbert series.
\end{proof}
\begin{remark}
If $f$ is actually a zero-divisor of $R/J$, then in some degree $t'$, the graded version of the sequence \eqref{eq:seq} will have a kernel $K$ larger than expected. This will force the inequality in 
\[t'^{d}H_{(R/J)}(t')=H_{(R/J)(-d)}(t') < H_{K}(t').\] 
In this case we will have 
\[t'^{d}H_{(R/J)}(t') - H_{R/J}(t')  <H_{K}(t') - H_{R/J}(t')= H_{R/(J+f)}(t'),\]
which implies that
\[(1-t'^{d})H_{R/J}(t') < H_{R/(J+f)}(t').\]

\end{remark}
The previous results allow for the following test.  Since the zero-divisors of $J$ correspond to the union of prime ideals  $P_{i}$ that contain $J$, we can select one $f \in P_{i}$ of degree $d$ which vanishes on the subvariety $\V(P_{i})\subset \V(J)$ but does not vanish on $\V(J)$.  If we show that  $(1-t^{d})H_{R/J}(t) = H_{R/(J+f)}(t)$, then $f$ is not a zero-divisor of $R/J$. This would show that $P_{i}$ could not have been a prime ideal associated to $J$.

We provide the results of this computational test with the prime ideals $P_{15}$ and $P_{14}$. We wanted to check if a map had a kernel, so we worked over characteristic 101. Non-vanishing modulo a prime $p$ implies non-vanishing in characteristic 0. 

In Macaulay 2 we computed a Gr\"obner basis of $J=\langle M_{3} + M_{5} + M_{6}\rangle$ in about 30 seconds.
The Poincare polynomial $P_{J}$ of $R/J$ is
\begin{multline*}
P_{J}=1-10 T^{3}-81 T^{5}-1605 T^{6}+18117 T^{7}-77517 T^{8}+192794 T^{9}-315792 T^{10}
\\
+350676 T^{11}-243572 T^{12}+48438 T^{13}+116883 T^{14}-175239 T^{15}+140238 T^{16}\\
-75330 T^{17}+27954 T^{18}-6912 T^{19}+1026 T^{20}-69 T^{21}
.\end{multline*}

For the prime ideal $P_{15}$ we constructed slices in the $B$-direction $Y_{1},Y_{2},Y_{3}$, computed  $\det(x_{1}Y_{1} + x_{2}Y_{2} + x_{3}Y_{3}) \equiv0$ and selected the polynomial $f$ as the coefficient of $x_{1}^{3}$. Precisely, 
\[
f =\det\bgroup\begin{pmatrix}a_{11}&
     a_{12}&
     a_{13}\\
     b_{11}&
     b_{12}&
     b_{13}\\
     c_{11}&
     c_{12}&
     c_{13}\\
     \end{pmatrix}\egroup
.\]
This polynomial $f$ vanishes on $\Prank^{3,2,3}$ and $\V(P_{15})$ but not on $\Prank^{3,3,2}$, and thus not on $X$. 

Computing a Gr\"obner basis of $J +\langle f \rangle$ took about 10 hours on a server that allowed us to use 16GB of RAM and up to 8 Intel(R) Xeon(R) CPU           X5460 3.16GHz processors.  The Poincare polynomial $P_{f}$ of $R/(J+f)$ is
\begin{multline*}P_{f}=
1-11t^3-81t^5-1595t^6+18117t^7-77436t^8+194399t^9\\
-333909t^{10}+428193t^{11}-436366t^{12}+364230t^{13}-233793t^{14}\\
+68333t^{15}+91800t^{16}-192213t^{17}+203193t^{18}-147150t^{19} \\
+76356t^{20}-28023t^{21}+6912t^{22}-1026t^{23}+69t^{24}
.\end{multline*}
Now it is easy to check that $(1-t^{3})P_{J} = P_{f}$, which implies \[(1-t^{3})H_{R/J}(t) = H_{R/(J+f)}(t),\] and thus $f$ is not a zero-divisor of $R/J$. The prime $P_{15}$ is thus not an embedded prime of $J$.  By the $\mathfrak{S}_{2}$ symmetry of $J$, we conclude that $P_{15'}$ is also not an embedded prime of $J$.

For the prime ideal $P_{14}$, the module $S_{22}S_{211}S_{211}$ vanishes on $\V(P_{14})$ but not on $X$. We select the highest weight polynomial $g$ for our test:
\begin{multline*}
g =a_{13}a_{21}b_{12}c_{21}-a_{13}a_{21}c_{12}b_{21}+3a_{23}a_{12}b_{11}c_{21}+c_{13}a_{21}a_{12}b_{21}- \\
b_{13}a_{21}a_{12}c_{21}-3a_{23}a_{12}c_{11}b_{21}+a_{23}a_{11}b_{22}c_{11}-a_{23}a_{11}c_{22}b_{11}+\\
2c_{22}a_{21}a_{13}b_{11}-c_{22}a_{13}a_{11}b_{21}-2b_{22}a_{21}a_{13}c_{11}+b_{22}a_{21}c_{13}a_{11}+\\
b_{22}a_{13}a_{11}c_{21}-c_{22}a_{21}b_{13}a_{11}+c_{23}a_{11}a_{22}b_{11}+2b_{23}a_{21}a_{12}c_{11}-\\
b_{23}a_{21}c_{12}a_{11}-b_{23}a_{12}a_{11}c_{21}-c_{13}b_{12}a_{21}^2+b_{13}c_{12}a_{21}^2-\\
c_{23}b_{22}a_{11}^2+b_{23}c_{22}a_{11}^2-a_{22}a_{21}b_{13}c_{11}-2c_{23}a_{21}a_{12}b_{11}+\\
c_{23}a_{21}b_{12}a_{11}+c_{23}a_{12}a_{11}b_{21}-b_{23}a_{11}a_{22}c_{11}+a_{23}a_{21}b_{12}c_{11}-\\
a_{23}a_{21}c_{12}b_{11}+a_{22}a_{21}c_{13}b_{11}-3a_{22}a_{13}b_{11}c_{21}+3a_{22}a_{13}c_{11}b_{21}-\\
2c_{21}a_{23}b_{12}a_{11}+2c_{21}a_{22}b_{13}a_{11}+2b_{21}a_{23}c_{12}a_{11}-2b_{21}a_{22}c_{13}a_{11}.
\end{multline*}

Computing a Gr\"obner basis of $J+\langle g\rangle$ took about 45 hours to finish on a server that allowed us to use 16GB of RAM and up to 8 processors. The Poincare polynomial $P_{g}$ of $R/(J+g)$ is
\begin{multline*}P_{g}=
1-10t^3-t^4-81t^5-1605t^6+18127t^7-77517t^8+192875t^9\\
-314187t^{10}+332559t^{11}-166055t^{12}-144356t^{13}+432675t^{14}\\
-525915t^{15}+383810t^{16}-123768t^{17}-88929t^{18}+168327t^{19}\\
-139212t^{20}+75261t^{21}-27954t^{22}+6912t^{23}-1026t^{24}+69t^{25}
.\end{multline*}
It is again a simple check that $(1-t^{4})P_{J} = P_{g}$, which implies 
\[(1-t^{4})H_{R/J}(t) = H_{R/(J+g)}(t).\]
As before, $g$ is not a zero-divisor of $R/J$, and the prime $P_{14}$ is not an embedded prime of $J$. 

We have shown the following
\begin{theorem}\label{thm:prime}
The ideal $J = \langle M_{3}+M_{5}+M_{6}\rangle$ is prime.
\end{theorem}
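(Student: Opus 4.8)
The plan is to verify primality of $J = \langle M_3 + M_5 + M_6 \rangle$ by combining the set-theoretic identification $\sqrt{J} = I(X)$ from Proposition~\ref{prop:irred} with a direct check that $J$ has no embedded components. Since $X$ is a parametrized (hence irreducible) variety and $\V(J) = X$ as sets, the radical $\sqrt{J}$ is the prime ideal $I(X)$. Therefore $J$ is prime if and only if $J = I(X)$, which holds if and only if $J$ has no embedded primary components. By the remark on connected group actions, any minimal primary decomposition of the $G$-stable ideal $J$ has $G$-stable associated primes, so every associated prime of $J$ is the prime ideal of some orbit closure contained in $X$. Reading off Figure~\ref{fig:graph}, the only candidates for associated primes strictly larger than $I(X)$ are $P_{14}$, $P_{15}$, and $P_{15'}$ --- and by the $\mathfrak{S}_2$ symmetry of $J$, it suffices to rule out $P_{14}$ and $P_{15}$.

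First I would recall the standard zero-divisor criterion: $x$ is a zero-divisor on $R/J$ if and only if $x$ lies in one of the associated primes of $J$ (Proposition~\ref{prop:AtiyahMacDonald} above). So to show that $P_{15}$ is not associated to $J$, it is enough to exhibit a single homogeneous $f \in P_{15}$ that is a \emph{non}-zero-divisor on $R/J$; then $P_{15} \not\subset \bigcup (\text{ass. primes})$, forcing $P_{15}$ out of the decomposition. The non-zero-divisor property is detected by Hilbert series via the short exact sequence
\[
0 \to (R/J)(-d) \xrightarrow{\ f\ } R/J \to R/(J+f) \to 0,
\]
which is exact precisely when $f$ is a non-zero-divisor; in that case $(1-t^d)H_{R/J}(t) = H_{R/(J+f)}(t)$, and (by the Remark on the failure case) a strict inequality occurs in some degree otherwise. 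So the test reduces to: pick $f \in P_{15}$ not vanishing on $X$, compute the Hilbert series of $R/J$ and of $R/(J+f)$, and check the identity. For $P_{15}$ one can take $f = \det\left(\begin{smallmatrix} a_{11}&a_{12}&a_{13}\\ b_{11}&b_{12}&b_{13}\\ c_{11}&c_{12}&c_{13}\end{smallmatrix}\right)$, the $x_1^3$-coefficient of $\det(x_1 Y_1 + x_2 Y_2 + x_3 Y_3)$, which vanishes on $\Prank^{3,2,3} \supset \V(P_{15})$ but not on $\Prank^{3,3,2} \supset X$; for $P_{14}$ one can take the highest-weight vector $g$ of the module $S_{22}S_{211}S_{211}$, which vanishes on $\V(P_{14})$ but not on $X$.

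I would then carry out the Gr\"obner basis computations (over a large prime such as $101$, since non-vanishing mod $p$ implies non-vanishing in characteristic $0$, and a spurious kernel can only disappear under reduction, not appear): compute $H_{R/J}$, then $H_{R/(J+f)}$, and verify $(1-t^3)P_J = P_f$; likewise compute $H_{R/(J+g)}$ and verify $(1-t^4)P_J = P_g$. These identities show $f$ and $g$ are non-zero-divisors on $R/J$, so neither $P_{15}$ nor $P_{14}$ (nor $P_{15'}$, by symmetry) is an associated prime. Since the only associated prime left is $I(X)$ itself and $\sqrt J = I(X)$, we conclude $J = I(X)$ is prime. The main obstacle here is purely computational: the Gr\"obner basis of $J + \langle f\rangle$ and especially $J + \langle g\rangle$ in $27$ variables is large (the latter reportedly took around $45$ hours with substantial memory), so the delicate part is choosing $f$ and $g$ of low degree lying in the right orbit ideal while keeping the computation feasible, and trusting the correctness of the machine arithmetic --- which is why the whole verification is pinned down by the exact rational identities among Poincar\'e polynomials rather than by a numerical check.
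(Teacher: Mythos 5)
Your overall strategy is the same as the paper's: use Proposition~\ref{prop:irred} to get $\sqrt{J}=I(X)$, use $G$-stability of associated primes to reduce the candidate embedded primes to $P_{14},P_{15},P_{15'}$, and rule each out with the non-zero-divisor/Hilbert-series test using exactly the same witnesses $f$ and $g$. But there is one genuine logical gap in your reduction. You assert that ``$J = I(X)$ \ldots holds if and only if $J$ has no embedded primary components.'' This is false: an ideal with prime radical $P$ and no embedded primes is merely $P$-primary, not necessarily equal to $P$ (think of $(x^2)$ in $k[x]$, or a symbolic power $P^{(2)}$, which is even $G$-stable). So after eliminating $P_{14},P_{15},P_{15'}$ you have only shown that $J$ is $I(X)$-primary; you still must show that the $I(X)$-primary component is $I(X)$ itself, i.e.\ that $I(X)$ occurs with multiplicity one in the primary decomposition of $J$.

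The paper closes this gap by a degree comparison: Proposition~\ref{prop:m2} gives $\deg(\V(M_3+M_5+M_6))=297$ (a scheme-theoretic degree, counted with multiplicity, from the Gr\"obner basis over $\QQ$), while Computation~\ref{comp:Bertini} gives $\deg(X)=297$ for the reduced variety; since the top-dimensional degree of $R/J$ equals the multiplicity times $\deg(X)$, the multiplicity is $1$. You should add this step; all the needed computations are already in hand. A smaller, more cosmetic point: your parenthetical that ``a spurious kernel can only disappear under reduction, not appear'' has the semicontinuity backwards --- reduction mod $p$ can only enlarge Hilbert functions --- so the clean justification is that the verified identity mod $101$, together with the fact that $H_{R/J}$ agrees in characteristics $0$ and $101$, forces the identity (and hence injectivity of multiplication by $f$) over $\QQ$.
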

\begin{proof}
By Proposition \ref{prop:irred} we know that $\sqrt J =I(X)$. By Proposition \ref{prop:m2}, we know that the degree of the top dimensional component of $J$ is $297$, counted with multiplicity.
By Computation \ref{comp:Bertini}, we know that the degree of $X$ is 297. So we know that in a primary decomposition of $J$, $I(X)$ occurs with multiplicity 1. It only remains to rule out embedded primes. By the above discussion, if we have a primary decomposition of the form $J = I(X) \cap Q_{14}\cap Q_{15} \cap Q_{15'}$, where the $Q_{i}$ are primary ideals associated to the prime ideals $P_{i}$, then we showed that their multiplicity must be zero. So $J= I(X)$, and in particular $J$ is prime.
\end{proof}
This completes the proof of Theorem \ref{thm:main}.  We conjecture that a similar calculation will work to show that the ideal of the orbit closure associated to $F$ is minimally generated by $\mathfrak{S}_{3}.M_{3}$.

\section*{Acknowledgements}
We would like to thank Bernd Sturmfels for suggesting this problem to us, along with some suggestions for a few of the computations.

We would also like to thank Jonathan Hauenstein for his expertise in Numerical Algebraic Geometry and Bertini, which provided an unquestionably useful component to our studies into the trifocal ideal.  We also thank Steven Sam for suggesting we try the crucial computation in Section \ref{sec:steven}. \bibliographystyle{amsalpha}
\bibliography{trifocal_bib}

\end{document}